\numberwithin{equation}{section}
\newtheorem{Th}{Theorem}[section]
\newtheorem{Cor}[Th]{Corollary}
\newtheorem{Lem}[Th]{Lemma}
\newtheorem{Prop}[Th]{Proposition}
\newtheorem{Claim}[Th]{Claim}
\newtheorem{claim-num}{Claim}
\def\av#1{\overline{#1}}
\def\a{\alpha}
\def\eps{\varepsilon}
\def\vk{\varkappa}
\def\l{\lambda}
\def\f{\varphi}
\def\s{\sigma}
\def\aut#1{\mathrm{Aut}(#1)}
\def\out#1{\mathrm{Out}(#1)}
\def\str#1{\langle#1\rangle}
\def\inn#1{\mathrm{Inn}(#1)}
\def\iat{\mathrm{IA}}
\def\inv{^{-1}}
\def\sle{\subseteq}
\def\Z{\mathbf Z}
\def\id{\mathrm{id}}
\renewcommand{\le}{\leqslant}
\renewcommand{\ge}{\geqslant}
\def\To{\Rightarrow}
\def\rank{\operatorname{rank} }
\def\vk{\varkappa}
\def\cB{\mathscr B}
\def\cC{\mathscr C}
\def\cD{\mathscr D}
\def\cE{\mathscr E}
\begin{document}

\title[Outer automorphisms of free groups]
{On the outer automorphism groups of free groups}
\author{Vladimir Tolstykh}
\address{
Department of Mathematics and Computer Science \\
Istanbul Arel University \\
34537 Tepekent - B\"uy\"uk\c{c}ekmece \\
Istanbul \\
Turkey}
\email{vladimirtolstykh@arel.edu.tr}
\subjclass{20F28 (20E05)}
\maketitle

\begin{abstract}
We prove that the outer automorphism group
of a free group of countably infinite rank
is complete.
\end{abstract}

\section*{Introduction}

Khramtsov \cite{Khr} proved in 1990 that the outer automorphism
group $\out{F_n}$ of a free group $F_n$ of finite rank
$n \ge 3$ is complete; his proof used the results
on the structure of finite subgroups of the group $\out{F_n}.$ Later, in 2000,
Bridson and Vogtmann \cite{BrdVo} obtained another proof of completeness
of the group $\out{F_n}$ based on the results on the
action of the group $\out{F_n}$ on the Culler--Vogtmann
Outer Space.

The quoted results on the outer automorphism groups $\out{F_n}$
have been the natural development of the earlier results
by Dyer and Formanek \cite{DFo,DFo2,DFo3} on the automorphism groups of relatively
free groups of finite rank. One of the main results
obtained by Dyer and Formanek \cite{DFo} stated that the
automorphism group $\aut{F_n}$ is complete for
all $n \ge 2.$ In 2000, the author extended
this result to arbitrary free nonabelian groups
by showing that the automorphism group of any
infinitely generated free group is also complete \cite{To:Towers}.

The main result of the present paper states
completeness of the outer automorphism
group of a free group of countably infinite
rank. The similar result on free groups
of arbitrary infinite rank
seems to be also true, but
the main tool we rely upon in our paper---the small index property for relatively free algebras---is established only for free groups
of countably infinite rank. We say that
a relatively free algebra $\mathfrak F$ of infinite
rank $\vk$ has the {\it small index property},
if any subgroup $\Sigma$ of the automorphism
group $\Gamma=\aut{\mathfrak F}$ of index
at most $\vk$ contains the pointwise stabilizer
$\Gamma_{(U)}$ of a subset $U$ of $\mathfrak F$
of cardinality $< \vk$ \cite{To:Sm4Nilps}
(in the case when $\mathfrak F$ is countable,
this property coincides with the small index property
for {\it countable} structures, much studied
since the early 1980s.) As it has been demonstrated
in \cite{To:Sm4Nilps}, the small index property
is true for arbitrary infinitely generated free
nilpotent groups. The case of infinitely generated free groups
appears to be much harder, the validity of the
small index property being established
by Bryant and Evans in \cite{BrEv} only for free
groups of countably infinite rank.

Given a group $G$ and a relation $R \sle G^n$ on $G,$
we shall call $R$ {\it definable} in $G$ if it
can be characterized in $G$ in terms of
group operation; any definable relation on $G$
is invariant under all automorphisms of $G.$
If a relation $R$ on $G$ admits
a group-theoretic characterization
in $G$ involving relations $R^*_1,\ldots,R_n^*$ on $G,$ we shall say
that $R$ is definable in $G$ with
{\it parameters} $R^*_1,\ldots,R_n^*.$ Accordingly,
if all parameters $R^*_1,\ldots,R_n^*$
are invariant under a certain
automorphism $\s$ of $G,$ then
any relation $R$ which is definable in $G$
with parameters $R^*_1,\ldots,R_n^*$ is also invariant under $\s.$

Let $F$ denote a free group of infinite rank.

The paper is organized as follows. The bulk of the
first two sections is devoted to definability
of so-called extremal involutions in the group $\out F.$

An involution $\f \in \aut G$ where $G$ is a relatively
free group is said to be {\it extremal} if there is a basis
$\cB$ of $G$ such that $\f$ inverts some element of $\cB$
and fixes all other elements of $\cB.$ Respectively,
an involution $f \in \out G$ is called {\it extremal} if it is induced
(under the natural homomorphism $\aut G \to \out G$)
by an extremal involution from $\aut G.$ In the first
section we prove definability of the family
of involutions from $\out F$ which induce
diagonalizable involutions in the automorphism
group $\aut A$ of the abelianization $A$ of $F.$

In the second section we first prove definability
of a certain family of $1$-involutions in the group $\out F$ (involutions
that induce extremal involutions in the group $\aut A),$
and then work to obtain an explicit group-theoretic
characterization of the extremal involutions
in the group $\out F.$ Note that the extremal
involution are also used heavily in the paper \cite{Khr} by
Khramtsov, but he does not provide an explicit
description of them.

Let $\cB$ be a basis of $F.$ In the third section,
based on definability of extremal involutions,
we prove that an arbitrary automorphism $\Delta \in \aut{\out F}$
can be followed by a suitable inner automorphism to ensure that the
resulting automorphism $\Delta'$ fixes pointwise
the images of all $\cB$-finitary automorphisms
of the group $F$ in the group $\out F.$ At the final step, assuming
that $F$ is of countably infinite rank, and using
the small index property for $F$ we prove that
the automorphism $\Delta'$ is in fact trivial,
completing the proof.

\section{Involutions}

Let $F$ denote an infinitely generated free group. In this section we shall prove
definability of some classes of involutions of the group $\out F$
that will be used in the next section for group-theoretic characterization
of the extremal involutions.

To denote elements of the group $\out F$ we shall use lowercase Latin
letters and to denote elements of the group $\aut F$ lowercase Greek letters.
The natural homomorphism $\aut F \to \out F$ will be denoted
by $\widehat{\phantom a};$ whenever $r=\widehat \rho$
where $r \in \out F$ and $\rho \in \aut F,$ we shall say
that $r$ is {\it induced} by $\rho.$ The abelianization $F/F'$ of
$F$ will be denoted by $A.$

According to Theorem 3 of \cite{DSc}, which describes
elements of prime order in the automorphism
groups of free groups, if $\f \in \aut F$ is an involution,
then there is a basis $\cB$ of $F$ called a {\it canonical
basis} for $\f$ and a partition
\begin{equation} \label{Part0CanBas}
\cB=U \sqcup X \sqcup \bigsqcup_{x \in X} Y_x \sqcup \{z,z' : z \in Z \}
\end{equation}
of $\cB$ such that the action of $\f$ on $\cB$ is as follows:
\begin{align} \label{eqCanForm}
&\quad \f u = u, \quad u \in U, \\
& \begin{cases}
\f z = z' & z \in Z, \\
\f z'= z, & \\
\end{cases}  \nonumber \\
&\begin{cases}
\f x =x \inv,    & x \in X,\\
\f y =x y x\inv, & y \in Y_x.
\end{cases} \nonumber
\end{align}
(note that some sets participating in the right-hand
side of \eqref{Part0CanBas} might be empty).
As in \cite{To:Towers}, we shall call
any set of the form $\{x\} \cup Y_x$ where $x \in X=X(\cB)$
a {\it block} of $\cB;$ the cardinal $|Y_x|+1$
shall be called the {\it size} of the block. The set
$U=U(\cB)$ will be called the {\it fixed part}
of $\cB.$

Again as in \cite{To:Towers}, we call an involution
$\f \in \aut F$ a {\it soft} involution if the `permutational'
part $\{z, z' : z \in Z(\cB)\}$ is empty for all canonical bases $\cB$ for $\f.$
An involution $f \in \out F$ will be called {\it soft} if there is a soft involution
in $\aut F$ which induces $f.$ Observe that an involution
$\f \in \aut F$ is soft if and only if the image
of $\f$ under the natural homomorphism $\aut F \to \aut{A/2A},$
determined by the natural homomorphism $F \to A/2A,$
is trivial.

It is proved in \cite[Prop. 2.2]{To:Towers}
that soft involutions $\f,\psi$ are conjugate
in $\aut F$ if and only for all canonical
bases $\cB,\cC$ for $\f,\psi,$ respectively,
the action of $\f$ on $\cB$ is isomorphic
to the action of $\psi$ on $\cC,$ or, in
other words, $\f,\psi$ are conjugate if and only
if they have the same canonical form.
Bearing this result in mind, it is quite convenient
to call a soft involution $\f \in \aut F$ a {\it $\gamma$-involution}
if any canonical basis for $\f$ contains exactly $\gamma$ blocks.
Now if
$$
\cB=U \sqcup X \sqcup \bigsqcup_{x \in X} Y_x
$$
is a canonical basis for $\f \in \aut F,$
the tuple
\begin{equation}
\str{ |U|;\str{|Y_x|+1 : x \in X} }
\end{equation}
(the size of the fixed part followed by the tuple
of sizes of the blocks of $\cB$) determines
$\f$ up to conjugacy. We shall call the tuple
(\theequation) the {\it type} of $\f.$

Following a long-standing tradition, we divide
involutions of the group $\out F$ into involutions
of the {first kind} and involutions of the
{second kind}. By definition, an involution
$f \in \out F$ is called an involution of the {\it first
kind} if its preimage in $\aut F$ contains
an involution, and an involution of the
{\it second kind}, otherwise.

Given an element $a \in F,$ we shall denote by $\tau_a$ the
inner automorphism (conjugation) of $F$ determined by $a$:
$$
\tau_a(w)=awa\inv \qquad (w \in F).
$$

According to Theorem 5.2 of \cite{Cu}, which describes
outer automorphisms of free groups of prime order, the preimage of an involution $p \in \out F$
of the second kind contains an automorphism $\pi$ of $F$ such that there is a basis
$$
\cB = \{u\} \sqcup \{z,z' : z \in Z\}
$$
such that $\pi$ takes $u$ to itself and
\begin{alignat}2  \label{SecKind}
&\s z  &&=z',\\
&\s z' &&= u z u\inv. \nonumber
\end{alignat}
for all $z \in Z.$ Thus the square of $\pi$ is the inner
automorphism $\tau_u$ determined by a primitive element $u.$
Clearly, $p$ does not vanish under the natural
homomorphism $\eps : \out F \to \aut{A/2A}$ induced
by the natural homomorphism $\aut F \to \aut{A/2A}.$
It follows that soft involutions of $\out F$ are exactly those
that vanish under $\eps.$

\begin{Lem} \label{ConjCrit}
Let $\f$ be a soft involution of $\aut F,$
let $\cB$ be a canonical basis for $\f$
and let $X=X(\cB)$ be the set of elements
of $\cB$ that are inverted by $\f.$

{\rm (i)}  Suppose that an involution $g \in \out F$ is a conjugate of $\widehat \f.$
Then $g=\widehat \psi$ where a soft involution
$\psi \in \aut F$ is either a conjugate of $\f,$ or a conjugate
of $\tau_x \f,$ where $x$ is an element of
$X;$

{\rm (ii)} an element $s$ in $\out F$ commutes with $\widehat \f$ if and only
if there is a $\s \in \aut F$ in the preimage of $s$
such that
$$
\text{either } \s \f =\f \s, \text{ or } \s \f =\tau_x \f \s
$$
for a suitable $x \in X;$

{\rm (iii)} if for all $x \in X,$ the involutions
$\f,\tau_x \f$ are not conjugate {\rm(}have different canonical forms{\rm)} in $\aut F,$ then for every
$s \in \out F$ which commutes with $\widehat \f,$
there is a preimage $\s \in \aut F$ of $s$ which commutes
with $\f.$
\end{Lem}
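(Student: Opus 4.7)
My plan is to reduce all three parts to one combinatorial claim about
twisted conjugacy. Let $F$ act on itself by
$b \cdot c := bc\f(b\inv).$ The \emph{Key Claim} I would establish is
that every $c \in F$ with $\f(c) = c\inv$ lies in the orbit of some
element of $\{1\} \cup X$ under this action.

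Assuming the Key Claim, part (i) follows by writing
$g = \widehat{\rho \f \rho\inv}$ for an appropriate $\rho \in \aut F.$
Any involution $\psi \in \aut F$ in the preimage of $g$ has the form
$\psi = \tau_{c'}(\rho \f \rho\inv) = \rho(\tau_d \f)\rho\inv$ with
$d = \rho\inv(c')$ and $\f(d) = d\inv.$ Applying the Key Claim to $d$
produces $b$ with $bd\f(b\inv) \in \{1\} \cup X,$ and the computation
$\tau_b (\tau_d \f)\tau_b\inv = \tau_{bd\f(b\inv)}\,\f$ (using
$\f \tau_b\inv = \tau_{\f(b\inv)}\f$) shows that $\tau_d \f$---and
hence $\psi$---is $\aut F$-conjugate to $\f$ or to some $\tau_x \f.$
Softness of $\psi$ is automatic because $\aut F \to \aut{A/2A}$
factors through $\out F.$ For (ii), start from any lift $\s \in \aut F$
of $s;$ since $s$ commutes with $\widehat \f,$ we have
$\s \f \s\inv = \tau_c \f$ with $\f(c) = c\inv,$ and the Key Claim
provides $b$ with $bc\f(b\inv) \in \{1\} \cup X.$ Replacing $\s$ by
the lift $\s' = \tau_b \s$ yields
\[
\s' \f (\s')\inv = \tau_b \tau_c \f \tau_b\inv
                 = \tau_{bc\f(b\inv)}\,\f,
\]
which lies in $\{\f\} \cup \{\tau_x \f : x \in X\};$ rearranging gives
the two alternatives of (ii), and the reverse direction is immediate
because $\tau_x$ is inner. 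Part (iii) is then a one-line consequence
of (ii): the alternative $\s\f = \tau_x \f \s$ would conjugate $\f$
to $\tau_x \f,$ contradicting the hypothesis.

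The main obstacle is the Key Claim, which I would prove via the
semidirect product $\tilde F = F \rtimes \str \f.$ Involutions in the
coset $F\f$ are exactly the elements $c\f$ with $\f(c) = c\inv,$ and
$F$-conjugacy of such elements in $\tilde F$ coincides with the
orbit relation on $c$ (and in fact with $\tilde F$-conjugacy, since
$c\inv = \f(c)$ is already twisted-conjugate to $c$ via $u = c\inv$).
Using the $\f$-invariant decomposition of $F$ as the free product of
$\str U$ (on which $\f$ is trivial) and the subgroups
$\str{\{x\} \cup Y_x}$ for $x \in X$ (on each of which $\f$ is a
one-block involution), one presents $\tilde F$ as an iterated
amalgamated free product over $\str \f$ of the pieces
$\str U \times \str \f$ and $\str{\{x\} \cup Y_x} \rtimes \str \f.$
A Tietze transformation---introducing $\alpha = \f$ and
$\beta_x = x\inv \f,$ then eliminating $x = \alpha \beta_x$ and
$\f = \alpha$---identifies each one-block piece with
$\str{\alpha} * (\str{\beta_x} \times \str{Y_x}),$ that is,
$\Z/2 * (\Z/2 \times F_{|Y_x|}).$ A Bass--Serre analysis of torsion
in the resulting graph of groups then shows that every involution of
$\tilde F$ is conjugate to $\f$ or to some $\beta_x$ ($x \in X$),
with these representatives pairwise non-conjugate, which translates
back exactly to the Key Claim.
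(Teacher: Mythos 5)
Your reduction of all three parts to the Key Claim is exactly the skeleton of the paper's proof, and your proposal is correct. In the paper, part (i) is handled by writing $\sigma \psi \sigma\inv = \tau_v \f,$ observing that $\f(v)=v\inv,$ and then invoking Lemma 2.3 of \cite{To:Towers}, which asserts precisely your Key Claim: every such $v$ equals $\f(w)w\inv$ or $\f(w)xw\inv$ for some $w \in F$ and $x \in X$ --- that is, $v$ lies in the twisted orbit of $1$ or of $x$ (take $b=\f(w),$ so that $\f(b\inv)=w\inv$). Parts (ii) and (iii) then follow from the same computation $\tau_b \tau_c \f \tau_b\inv = \tau_{bc\f(b\inv)}\f$ that you perform. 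The genuine difference is that you supply an independent proof of the imported lemma via Bass--Serre theory for $\widetilde F = F \rtimes \str{\f}$: your identifications are correct (one checks $\beta_x^2 = x\inv\f(x\inv)=1,$ $[\beta_x,y]=1$ for $y \in Y_x,$ and $\alpha\beta_x = \f(x\inv)=x,$ so each block piece is indeed $\Z/2 * (\Z/2 \times F(Y_x))$ and the fixed piece is $\str U \times \Z/2$), the torsion theorem for the resulting tree of groups conjugates every involution of the coset $F\f$ to $\alpha$ or some $\beta_x,$ and your remark that $c$ and $\f(c)$ are always twisted-conjugate (via $b=c\inv$) correctly collapses $\widetilde F$-conjugacy to $F$-conjugacy, which is what the twisted-orbit statement requires. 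What your route buys is self-containedness --- the paper's proof cannot be read without \cite{To:Towers} --- and a conceptual explanation of why the exceptional representatives are exactly $\{1\}\cup X;$ what it costs is the graph-of-groups machinery in place of the short direct cancellation argument behind the cited lemma.
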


\begin{proof}
(i) As we have observed above, any conjugate $g$ of $\widehat \f$
is a soft involution, since it must vanish, as $\f$
does, under the natural homomorphism $\out F \to \aut{A/2A}.$
Thus $g=\widehat \psi$ where $\psi \in \aut F$ is a
soft involution.

We have that
$$
\sigma \psi \sigma\inv =\tau_v \f
$$
where $\sigma$ is in $\aut F$ and $\tau_v$ is
conjugation determined by an element $v$ of $F.$ Since
$\psi$ is an involution, $\f v=v\inv.$ By Lemma 2.3 of
\cite{To:Towers}, there is a $w \in F$
such that $v =\f(w)w\inv$ or $v =\f(w) x w\inv,$
where $x$ is a member of a canonical basis for
$\f$ with $\f x =x\inv.$ Therefore,
$$
\text{either }\tau_{\f(w)}\inv \sigma \psi \sigma\inv \tau_{\f(w)}=\f,
\text{  or  }
\tau_{\f(w)}\inv \sigma \psi \sigma\inv \tau_{\f(w)}=\tau_x \f,
$$
as claimed.

(ii). Similarly to (i).

(iii). By (ii).
\end{proof}

Any involution $\theta \in \aut F$ which inverts all elements of
some basis of $F$ will be called a {\it symmetry};
any involution in $\out F$ whose full preimage in $\aut F$
contains a symmetry will be called a {\it symmetry}, too.

\begin{Prop} \label{Def_o_SoftInvs}
The family of all soft involutions is definable in the group $\out F.$
\end{Prop}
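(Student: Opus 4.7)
The aim is to find a group-theoretic property $P(f)$, invariant under $\aut{\out F}$, that singles out exactly the soft involutions among all elements of $\out F$. Recall that $f$ is soft if and only if $f^2=1$ and $\eps(f)=1$ in $\aut{A/2A}$, so equivalently iff $f$ lifts to an involution whose canonical basis has empty permutational part. The challenge is to detect the triviality of $\eps(f)$ using only the group structure of $\out F$, since the map $\eps$ is not a priori invariant under $\aut{\out F}$.

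My plan is to exploit an asymmetry in the commuting-conjugate structure. For a soft involution $f=\widehat \f$ with canonical basis $\cB = U\sqcup X\sqcup\bigsqcup_{x\in X} Y_x$, one can produce an infinite supply of pairwise commuting conjugates of $f$ by taking conjugates of $\f$ supported on pairwise disjoint sub-bases of $F$; moreover, arbitrary finite products of such conjugates remain soft and conjugate to $f$ (since they all map trivially to $\aut{A/2A}$). By contrast, for a non-soft involution $f$ the image $\eps(f)$ is a nontrivial involution of $\aut{A/2A}$ of a definite ``shape'' (a product of transpositions coming either from a permutational pair in the first-kind case or from the Curran form \eqref{SecKind} in the second-kind case), and in the $\mathbf F_2$-vector space $A/2A$ the product of two distinct commuting conjugate transposition-involutions is no longer a transposition-involution of the same shape. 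This should yield a definable condition of the form: \emph{$f$ is soft iff there is an infinite family $\{f_i\}$ of pairwise commuting conjugates of $f$ all of whose finite products are again conjugate to $f$}. I would verify sufficiency by constructing such a family explicitly for any soft $\widehat \f$, and verify necessity by showing that the image of such a family in $\aut{A/2A}$ forces $\eps(f)=1$.

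The organization would be: first handle first-kind involutions using Lemma \ref{ConjCrit}, which lets one transfer commutation and conjugacy information in $\out F$ to the level of $\aut F$, where the canonical form \eqref{eqCanForm} gives direct access to the action on $A/2A$; then handle second-kind involutions separately, using the explicit form \eqref{SecKind}. I expect the second-kind case to be the principal obstacle, because such involutions do not lift to involutions in $\aut F$, so Lemma \ref{ConjCrit} and the usual arguments based on canonical bases from \cite{To:Towers} do not apply directly. There one must analyze commutation relations in $\out F$ via lifts $\pi$ with $\pi^2=\tau_u$ and show that the twisted inner-automorphism obstruction still forces any sufficiently rich commuting configuration of conjugates to collapse $\eps(f)$ to the identity, contradicting the nontriviality of the Curran form on $A/2A$.
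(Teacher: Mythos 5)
Your proposed characterization --- ``$f$ is soft iff there is an infinite family of pairwise commuting conjugates of $f$ all of whose finite products are again conjugate to $f$'' --- fails in both directions, and the failure is rooted in the parenthetical justification ``since they all map trivially to $\aut{A/2A}$.'' Triviality in $\aut{A/2A}$ only guarantees that the products are \emph{soft}; it does not make them \emph{conjugate} to $f$. By Proposition 2.2 of \cite{To:Towers}, conjugacy of soft involutions is governed by the canonical form (the type), and the type is not preserved under products of commuting conjugates. Concretely, take $f$ extremal, induced by $\f$ with $\f x=x\inv$ and $\f u=u$ for $u\in U$: by Lemma \ref{Comm_with_an_ext}(ii) the product of $f$ with any distinct commuting conjugate is a $2$-involution of type $\langle\vk;1,1\rangle$, never conjugate to the $1$-involution $f$. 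So the extremal involutions --- which are soft, and which the whole of Section 2 depends on --- would be excluded by your test. The sufficiency direction therefore collapses already for the most important soft involutions.

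The necessity direction also fails. Take a first-kind non-soft involution $\f$ whose canonical basis has an infinite permutational part $Z$ and fixed part of cardinality $\vk$. Partition the swapped pairs $\{z,z'\}$ into infinitely many infinite subfamilies $P_1,P_2,\dots$ and let $\f_n$ swap the pairs in $P_n$ and fix the rest of the basis. The $\widehat{\f_n}$ are pairwise commuting conjugates of $\widehat\f$, and every finite product $\f_{n_1}\cdots\f_{n_k}$ swaps $|Z|$ pairs and fixes $\vk$ basis elements, i.e.\ has the same canonical form as $\f$; so all finite products are conjugate to $\widehat\f$. The $\mathbf F_2$-linear obstruction you hope for --- that the product of two commuting conjugate involutions in $\aut{A/2A}$ has a different ``shape'' --- amounts to the rank of $\eps(f)-1$ doubling, and $2\vk=\vk$ kills that argument exactly when the permutational part is infinite. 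The paper takes an entirely different route: it first isolates the anti-commutative conjugacy classes of involutions (Lemma \ref{antiC}), shows products of two symmetries realize certain IA-automorphisms (Claim \ref{NiceProds02Symms}), and then characterizes softness via \emph{order-three} elements --- every order-three element of $\out F$ survives in $\aut{A/2A}$ (by \cite{DSc} and \cite{Cu}), so $f$ is soft iff $f$ is an involution and no product $a_1f_1a_2f_2$, with $f_i$ conjugate to $f$ and $a_i$ a product of two involutions from an anti-commutative class, has order three. You would need to replace your commuting-family criterion with something of this kind; as stated, your condition defines neither a superset nor a subset of the soft involutions.
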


\begin{proof} The group-theoretic characterization
of soft involutions in $\out F$ will involve
the symmetries, and so we shall start with providing a definable
subset of $\out F$ which contains all symmetries.
Given a group $G$ we call a subset of $G$ {\it anti-commutative}
if its elements are pairwise not commuting.

The proof of Proposition 3.1 of \cite{To:Towers} demonstrates that if the canonical
form of an involution $\f \in \aut F$ is neither
\begin{alignat}2 \label{eqBeads}
\f u &=u,  &\quad u &\in U,\\
\f x &=x\inv,  & x & \in X,  \nonumber \\
\f y &=x y x\inv,  & y & \in Y_x, \nonumber
\end{alignat}
where $X$ is of (infinite) cardinality $\rank F$, all the sets $Y_x$ ($x \in X$) have
the same finite cardinality $k,$ and $|U| < k+1,$ nor
\begin{alignat}2 \label{eqSnakes}
\f u &=u,  &\quad u &\in U,\\
\f x &=x\inv,      &\quad &  \nonumber \\
\f y &=x y x\inv,  & y & \in Y, \nonumber
\end{alignat}
where the cardinal $|U|$ is finite, then the conjugacy
class of $\f$ in $\out F$ contains a pair of distinct commuting
elements $\f_1,\f_2$ with $\widehat \f_1 \ne \widehat \f_2.$
Thus the conjugacy class of $\widehat \f$ in $\out F$
is not anti-commutative.

Further, if $\f$ has the canonical form \eqref{eqSnakes},
the class of the involution $\tau_x\inv \f$ has neither
canonical form \eqref{eqBeads}, nor the canonical form \eqref{eqSnakes},
and hence the class of $\widehat \f =\widehat{\tau_x\inv \f}$
is not anti-commutative.

Next, the conjugacy class of any involution $p \in \out F$
of the second kind is also not anti-commutative.
As it has been explained above, there exists a $\pi \in \aut F$
in the full preimage of $p$ and a basis $\cB=\{u\} \sqcup \{z,z' : z \in Z\}$
of $F$ such that
\begin{alignat*}2
&\pi u  && = u,\\
&\pi z  &&=z', \qquad z \in Z,\\
&\pi z' &&= u z u\inv.
\end{alignat*}
A conjugate of $p=\widehat\pi$ commuting with
$p$ in $\out F$ can be constructed
as follows. Suppose that $\pi_1$ is an
automorphism of $F$ which acts on the basis $\cB$
as follows:
\begin{alignat*}2
&\pi_1 u  && = u,\\
&\pi_1 z  &&=z'{}\inv,\qquad z \in Z,\\
&\pi_1 z' &&= u z\inv u\inv.
\end{alignat*}
Clearly, $\widehat \pi_1$ is a conjugate of $\widehat \pi$
in $\out F.$ The product of $\pi$ and $\pi_1$ is
a soft involution followed by conjugation $\tau_u.$
This implies that $p=\widehat \pi$ and $\widehat \pi_1$
are commuting in $\out F.$

On the other hand, the class of symmetries (whose
elements all have the canonical form \eqref{eqBeads})
is anti-commutative. Indeed, observe that the image of any symmetry
under the natural homomorphism $\out F \to \aut A$ is the
automorphism $-\id_A$ of $A,$ and hence a product of
two symmetries in $\out F$ is induced by an $\iat$-automorphism
of $F$ (recall if $G$ is a group an automorphism
$\a \in \aut G$ is called an {\it $\iat$-automorphism}
of $G$ if it induces the trivial automorphism
on the abelianization of $G$). Meanwhile, no involution
in $\out F,$ as we have seen above, is induced by an
IA-automorphism of $F.$

Thus

\begin{Lem} \label{antiC}
The union of all anti-commutative conjugacy classes
of involutions in $\out F$ consists of soft
involutions and contains the
class of symmetries.
\end{Lem}

\begin{Claim} \label{NiceProds02Symms}
Every $\iat$-automorphism $\alpha$ of $\aut F$ which acts on
a suitable basis $\cB$ of $F$ of the form
$$
\cB = U \sqcup X \sqcup \bigsqcup_{x \in X} Y_x
$$
so that
\begin{alignat*}2
\alpha u &= u,         &\quad u &\in U, \\
\alpha x &= x,         & x &\in X,\\
\alpha y &= xy x\inv, & y &\in Y_x.
\end{alignat*}
is a product of two symmetries.
\end{Claim}

\begin{proof}
One immediately checks that $\alpha$ is the product
of the following symmetries $\theta$ and $\theta'$:
\begin{alignat*}3
\theta u &=u\inv,                  & \theta' u &=u\inv,               & u &\in U,\\
\theta x &=x\inv,                  & \theta' x &=x\inv,               & x &\in X,\\
\theta y &=y\inv, \qquad           & \theta' y &=x\inv y\inv x,\quad & y &\in Y_x.
\end{alignat*}
\end{proof}

Theorem 3 of \cite{DSc} and Theorem 5.2 of \cite{Cu}, whose partial cases concerning
involutions we have quoted above, imply that any element of $\out F$ of
order three does not vanish under the natural homomorphism $\out F \to \aut{A/2A}.$
Thus, by Lemma \ref{antiC}, for every soft involution $f \in \out F,$
no product of the form $a_1 f_1 a_2 f_2$
where $f_1,f_2$ are conjugates of $f$ and each of $a_1,a_2$ is a product
of two involutions from an anti-commutative
conjugacy class, is of order three.

Conversely, if an involution $p \in \out F$ is not soft, there is always a product
of the form $a_1 p_1 a_2 p_2,$ where $p_k$ is a conjugate of $p$ and
$a_k$ is a product of two involutions from an anti-commutative conjugacy class $(k=1,2)$,
which is of order three.
Observe first that if $p=\widehat \pi$ is an involution of the second kind (see
descripiton of the action of $\pi$ on $F$ in \eqref{SecKind})
then, by Claim \ref{NiceProds02Symms}, there is a product $a$
of two symmetries such that $ap$ is an involution of the
first kind which is not soft.

Assume then that $p \in \out F$ is an involution of the first kind
which is not soft and that $p=\widehat \pi$ where $\pi$
is an involution in $\aut F.$ We claim that there is a product $p_1 p_2$
of conjugates of $p$ of order three. Consider a canonical
basis $\cB$ for $\pi.$ Choose an element $b_1$ in the `permutational'
part $Z(\cB)$ of $\cB.$ Let $b_2 =\pi(b_1)$ and let $\cC=\cB \setminus \{b_1,b_2\}.$
We define conjugates $\pi_1,\pi_2$ of $\pi$ as follows:
$$
\pi|_{\cC} =\pi_1|_\cC = \pi_2|_\cC
$$
and
$$
\begin{cases}
\pi_1(b_1)=b_2\inv,\\
\pi_2(b_2)=b_1\inv,
\end{cases}
\quad
\begin{cases}
\pi_2(b_1)=b_2b_1,\\
\pi_2(b_2)=b_2\inv.
\end{cases}
$$
It is easy to see that the product $\widehat \pi_1 \widehat \pi_2$
of two conjugates of $p$ is of order three in the group $\out F.$
\end{proof}

\section{Extremal involutions}

The goal of this section is to find a group-theoretic characterization
of the class of the extremal involutions in the group $\out F.$
We call a soft involution $\f \in \aut F$ {\it extremal} \cite{To:Towers}
if given any canonical basis $\cB$ for $\f,$
$\f$ inverts exactly one element of $\cB$ and fixes all other
elements of $\cB$. Respectively, an involution $f \in \out F$
is extremal if it is induced by an extremal involution in $\aut F.$
Clearly, any extremal involution is a $1$-involution.

\begin{Lem} \label{lemFilter4fSnakes}
Let $K \sle \out F$ be a conjugacy class of soft involutions
satisfying the following condition:
\begin{equation} \label{Filter4fSnakes}
\begin{minipage}{0.8\linewidth}
the product $K^2=K \cdot K$ contains
exactly one class of involutions and
no element of $K$ is a square in the
group $\out F.$
\end{minipage}
\end{equation}
Then $K$ is a class of $1$-involutions induced
by involutions from $\aut F$ with the canonical form
\begin{alignat*}2
\f u &= u,             & u &\in U, \\
\f x &=x \inv,         &   &\\
\f y &=x y x\inv,\quad & y &\in Y
\end{alignat*}
where the cardinal $|Y|$ is finite.
\end{Lem}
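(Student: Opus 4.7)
The plan is to proceed by contradiction on the canonical form of a representative $\f \in \aut F$ of $K$. With canonical basis $\cB = U \sqcup X \sqcup \bigsqcup_{x \in X} Y_x$ and type $\str{|U|; \str{|Y_x|+1 : x \in X}}$, the conclusion requires $|X|=1$ and that a suitable representative has finite $|Y|$. By Lemma \ref{ConjCrit} the $\out F$-class $\widehat\f$ has, in general, two $\aut F$-types of representatives, namely that of $\f$ and that of $\tau_x \f$; for $|X|=1$ the respective block sizes are $|Y|+1$ and $|U|+1$, so the conclusion is equivalent to $|X|=1$ together with at least one of $|U|, |Y|$ being finite.

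The first step exploits the non-square hypothesis to force all blocks of $\cB$ to have pairwise distinct cardinalities. If two blocks $\{x_i\}\cup Y_{x_i}$ and $\{x_j\}\cup Y_{x_j}$ have equal sizes, fix a bijection $\phi: Y_{x_i}\to Y_{x_j}$ and define $\alpha\in\aut F$ by
$$
\alpha x_i = x_j,\quad \alpha x_j = x_i\inv,\quad \alpha y = \phi(y)\ (y \in Y_{x_i}),\quad \alpha y = x_i\phi\inv(y)x_i\inv\ (y \in Y_{x_j}),
$$
and $\alpha$ the identity on the rest of $\cB$. The image of $\cB$ under $\alpha$ is obtained from $\cB$ by inverting $x_i$ and conjugating the set $Y_{x_i}$ by $x_i$, both Nielsen transformations, so $\alpha \in \aut F$; direct computation yields $\alpha^2 = \f$, making $\widehat\f$ a square in $\out F$ and contradicting the hypothesis. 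Hence all block sizes are pairwise distinct.

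The second step uses the single-class hypothesis on $K^2$ to rule out the remaining bad configurations: (a) $|X|\ge 2$ with all block sizes distinct, and (b) $|X|=1$ with both $|U|$ and $|Y|$ infinite. In each case, one constructs two pairs of commuting conjugates $(\f,\f_1)$ and $(\f,\f_2)$ of $\f$ in $\aut F$ whose products are soft involutions of distinct canonical types, producing two distinct $\out F$-conjugacy classes of involutions inside $K^2$ and contradicting the hypothesis. The constructions exploit the infinite rank of $F$: in case (a), one uses the infinite fixed part $U$ as a reservoir for moving portions of different-sized blocks into and out of $U$, yielding products with different fixed-part sizes or different multisets of block sizes; in case (b), one splits $U$ and $Y$ into pieces of matching cardinality and cooks up conjugates whose products again have visibly different fixed parts. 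Lemma~\ref{ConjCrit}(ii) reduces commutation relations in $\out F$ to explicit relations in $\aut F$, so the types of the products can be computed directly.

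The main obstacle is the combinatorial construction in the second step, particularly in case (b), where the symmetry between $|U|$ and $|Y|$ in the canonical representatives of the class must be broken by the products so as to distinguish two different types in $K^2$. After these exclusions, the only surviving configuration is $|X|=1$ with at least one of $|U|, |Y|$ finite, which is precisely the conclusion of the lemma.
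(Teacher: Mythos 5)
Your first step contains a genuine error, and it is the load-bearing step of your plan. The automorphism $\alpha$ you build from two equal-size blocks $\{x_i\}\cup Y_{x_i}$ and $\{x_j\}\cup Y_{x_j}$ satisfies $\alpha^2=\f$ \emph{only when these two blocks exhaust $X$}: by construction $\alpha$ is the identity on every other block, so $\alpha^2$ fixes each remaining $x_k$, whereas $\f$ inverts it (and no inner twist $\tau_v\f$ can repair this, since on the abelianization $\alpha^2$ fixes $\bar x_k$ while $\tau_v\f$ negates it). Consequently the conclusion ``all block sizes are pairwise distinct'' is not established, and it is in fact not the right reduction: the genuinely delicate configurations are precisely those with several blocks of equal size occurring with \emph{odd finite} multiplicity, where no pairing-up produces a square root. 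The paper handles exactly this point by a parity argument: it first uses the single-class hypothesis to pin down $|U|+1\in\{\l,\mu\}$, and then, when the number of $\l$-blocks is odd, replaces $\f$ by $\tau_b\inv\f$ or $\tau_a\inv\f$ (which induces the same element of $\out F$ but increments or decrements the number of $\l$-blocks by one) before pairing blocks to exhibit a square. Your proposal has no substitute for this step, and without it the non-square hypothesis cannot be brought to bear on, say, a basis with three $\l$-blocks.

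Your second step is in the same spirit as the paper's argument (its Claim about products of two conjugates of a $1$-involution of type $\str{\vk;|Y|+1,|Y|+1}$, resp.\ of types $\str{\vk;n+1,|Y|}$ for varying $n$), but as sketched it assumes the fixed part $U$ is an ``infinite reservoir,'' which need not hold: $U$ may be finite or empty. The paper circumvents this by passing from $\f$ to $\tau_c\inv\f$, whose canonical basis has fixed part $Y_c$ and a new block $\{c\}\cup U$ of size $|U|+1$, so that a large block can always be traded for a large fixed part; this same device is what makes the case of $\vk$-sized blocks tractable. So the overall architecture (rule out $|X|\ge 2$ by producing two classes in $K^2$ or a square; rule out $|X|=1$ with $|U|,|Y|$ both infinite by producing infinitely many classes in $K^2$) matches the paper, but the two concrete mechanisms that make it work --- the parity adjustment via inner twists, and the block/fixed-part exchange via $\tau_c\inv$ --- are missing, and the step you do spell out in detail is incorrect as written.
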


\begin{proof} We start with a simple observation which
we shall use a number of times in the proof.

\begin{Claim} \label{TrivCovs}
Suppose that $\psi \in \aut F$ is a $1$-involution
with a canonical basis $V \sqcup \{x\} \sqcup Y$ such that
\begin{alignat*}2
\psi x &=x\inv,           && \\
\psi y &= x yx\inv,\qquad && y \in Y,  \\
\psi v &= v,              && v \in V,
\end{alignat*}
where $|Y|+1 \le |V|$ and let $L$ be the conjugacy
class of $\widehat\psi$ in the group $\out F.$ Then

{\rm (i)} any soft involution in $\aut F$
of type $\str{\vk;|Y|+1,|Y|+1}$ is a product
of two conjugates of $\psi;$

{\rm (ii)} if $Y$ is infinite, the product $L^2=L \cdot L$ contains
infinitely many conjugacy classes of soft involutions.
Consequently, the class $L$ does not meet
the condition \eqref{Filter4fSnakes}.
\end{Claim}

\begin{proof} (i) Let $W$ be a subset of $V$ of cardinality
$|Y|+1$ and let $w \in W.$ Then the product $\psi \psi_1$
where a conjugate $\psi_1$ of $\psi$ is defined so that
\begin{alignat*}2
\psi_1 x &=x,     && \\
\psi_1 y &=y,             && y \in Y, \\
\psi_1 w &=w\inv  && \\
\psi_1 u &= w u w\inv,\quad && u \in W \setminus \{w\},  \\
\psi_1 v &= v,             && v \in V \setminus W
\end{alignat*}
is a soft involution of type $\str{\vk; |Y|+1,|Y|+1}$
(cf. also Claim 3.2 of \cite{To:Towers} which demonstrates
that there is a product of two conjugates of a soft involution
of type $\str{*;\l,\mu,\ldots}$ where $\l < \mu$
having type $\str{*;\l,\l,\ldots}$).

(ii) Take a natural number $n,$ take a subset $T$ of $Y$ of cardinality $n,$ write $Z$ for $Y \setminus T,$
and take an element $z \in Z.$ Then the involution $\psi_2 \in \aut F$ such that
\begin{alignat*}2
\psi_2 x &=x,     && \\
\psi_2 y &= y,             && y \in T, \\
\psi_2 z &=z\inv  && \\
\psi_2 y &= z y z\inv,\quad && y \in Z \setminus \{z\},  \\
\psi_2 v &= v,             && v \in V.
\end{alignat*}
is a conjugate of $\psi$ whose product with $\psi$ is of type
$$
\str{\vk; |T|+1, |Z|}=\str{\vk; n+1,|Y|}.
$$
Now given distinct natural numbers $m,n,$ the images
in $\out F$ of soft involutions of types $\str{\vk;m+1,|Y|},$
$\str{\vk;n+1,|Y|}$ of $\aut F$ are not conjugate in $\out F$
(Lemma \ref{ConjCrit}).
\end{proof}

Let $\f \in \aut F$ be a soft involution
and let $\cB$ be a canonical basis for $\f.$
Suppose that the conjugacy class $K$ of the involution
$\widehat \f$ satisfies the condition \eqref{Filter4fSnakes}
and suppose, towards a contradiction, that $\f$ is not a $1$-involution.

Let us assume first that

I. {The canonical basis $\cB$ contains three blocks of different
sizes}, say
$$
\cB_a=\{a\} \cup Y_a,\quad \cB_b=\{b\} \cup Y_b,\text{ and } \cB_c=\{c\} \cup Y_c
$$
of size $\l,\mu,\nu,$ respectively,
where $\l < \mu < \nu.$ Consider the involution $\f_1=\tau_c\inv \f$ which
also induces the involution $\widehat \f.$
Then the basis
\begin{equation}
\cB_1=U(\cB) \sqcup \bigsqcup_{x \in X(\cB), x \ne c} \{c^{-1}x : x \in X(\cB)\} \sqcup \{c\} \sqcup \bigsqcup_{x \in X(\cB)} Y_x
\end{equation}
is a canonical basis for $\f_1$ and the fixed part of $\cB_1$
is equal to the nonempty set $Y_c.$ Observe, for the future use,
that $\{c\} \cup U$ where $U=U(\cB)$ is a block of
the basis $\cB_1$ of size $|U|+1.$

Now as $|\cB_a| \le |Y_c|$ and $|\cB_b| \le |Y_c|,$
it is easy to rework an example in part (i) of Claim \ref{TrivCovs}
to show that there is a products of two conjugates of $\f_1$ having
type $\str{\vk;\l,\l}$ and there is a product of
two conjugates of $\f_1$ having type $\str{\vk;\mu,\mu}.$
Accordingly, the product $K^2=K \cdot K$ of the
conjugacy class $K$ of $\widehat \f$ in $\out F$
contains at least two conjugacy classes of (soft) involutions,
thereby failing to meet the condition \eqref{Filter4fSnakes}.

II. Suppose therefore that all blocks of the canonical basis $\cB$
are either of size $\l,$ or of size $\mu$ where $\l < \mu$
and let
$$
\cB_a=\{a\} \cup Y_a,\quad \cB_b=\{b\} \cup Y_b
$$
be blocks of $\cB$ of size $\l$ and $\mu,$ respectively. Regarding
to the fixed part $U$ of $\cB,$ we then conclude that
\begin{equation} \label{my_fixie}
|U|+1 = \l, \text{ or } |U|+1 = \mu.
\end{equation}
Indeed, otherwise the cardinals $|U|+1,\l,\mu$ are pairwise distinct,
and, arguing as above, we see that in the case when $\mu < |U|+1$ the product $K^2$ contains
images of soft involutions of types $\str{\vk;\l,\l}$ and $\str{\vk;\mu,\mu},$
and in the case when $|U+1| < \mu$ the said product contains
images of soft involutions of types $\str{\vk;\l,\l}$ and $\str{\vk;|U|+1,|U|+1}.$
Therefore there exist at least two conjugacy of involutions
in $K^2,$ a contradiction.

(a). Let $\mu < \vk.$ We then claim that the involution $\widehat \f$
is a square in the group $\out F,$ contradicting \eqref{Filter4fSnakes}.
It is certainly
true in the case when both the cardinality of the set
of $\l$-blocks and the cardinality of the
set of $\mu$-blocks are even (in particular,
infinite)
when $\f$ is a square in the group $\aut F$
(see an example in Remark 4.11 of \cite{To:Towers}).

Now let the cardinality of the
set of $\l$-blocks be odd. Then the
cardinality of the set of $\mu$-blocks
is equal to $\vk$ (notice also that due to the assumption $\mu < \vk$
both cardinalities cannot be odd). However,
\begin{itemize}
\item if $|U|+1=\l,$ the number of $\l$-blocks is incremented
by $1$ after multiplication of $\f$ by
$\tau_b\inv;$

\item if $|U|+1=\mu,$ the number of $\l$-blocks is decremented by $1$ after multiplication
of $\f$ by $\tau_a\inv,$
\end{itemize}
while the cardinality of the set of $\mu$-blocks
remains equal to $\vk$ in either case. Thus either $\tau_b\inv \f,$
or $\tau_a\inv \f$ is a square.

The case when the number of $\mu$-blocks is odd is similar.

(b). Let then $\mu=\vk$ and let the fixed part $U$ of $\cB$ be also of cardinality
$\vk$ (equivalently, $|U|+1=\vk$ as in \eqref{my_fixie}). But then an appropriate modification of the proof of
part (ii) of Claim \ref{TrivCovs} shows that the product $K^2$
contains infinitely many conjugacy classes of involutions,
and hence cannot satisfy the condition \eqref{Filter4fSnakes}.

(c). Let the basis $\cB$ have at least two $\vk$-blocks. Then
the involution $\tau_b\inv \f$
has the fixed part of cardinality $\vk$
and a block of size $\vk.$ The part is therefore
reduced to the previous part (b).

(d). Let $\cB_b$ be the only block of size $\vk$
and let $|U|+1=\l.$ Now any canonical
basis for the involution $\tau_b\inv \f$ has at least two $\l$-blocks
and the fixed part of cardinality $\vk.$ It is easy
to see that the product $K^2$ contains, say the images
of $4$-involutions of type $\str{\vk;\l,\l,\l,\l}$ (in addition
to the images of 2-involutions of type $\str{\vk;\l,\l}),$
again failing to meet the condition \eqref{Filter4fSnakes}.

III. Suppose that {all blocks of $\cB$ are of the same
size $\l.$} Notice that there are must be at least two
$\l$-blocks, since, as we assumed above, $\f$ is not
a $1$-involution.

In the case when $\l < \vk,$ we see that either
$\f$ is a square (if there are infinitely many
blocks), or the product $K^2$ contains
at least two conjugacy classes of soft involutions
(arguing as in part (d) above).

Finally, the case when $\l=\vk$ we argue as in part (c).

To complete the proof, we are going to prove that no $1$-involution in $\out F$ is a square. For the image of any $1$-involution $f$
under the natural homomorphism $\out F \to \aut A$ is an extremal involution
$\mathfrak f$ in the group $\aut A,$ one possessing a decomposition
$$
A = A^-_{\mathfrak f} \oplus A^+_{\mathfrak f}
$$
where $A^{\pm}_{\mathfrak f}$ are the eigengroups of $\mathfrak f$
associated with the eigenvalues $\pm 1.$ The eigengroup
$A^-_{\mathfrak f}$ is of rank 1 and is generated by an unimodular
element $a \in A$ with $\mathfrak f a=-a.$ Were $\mathfrak f=\mathfrak g^2$
where $\mathfrak g \in \aut A$ a square in the group
$\aut A,$ we would have that $\mathfrak g a=\pm a,$ since $\mathfrak g$ would
be commuting with $\mathfrak f,$ and hence preserving
the eigengroup $A^-_{\mathfrak f}.$  But then $\mathfrak g^2 a=a \ne \mathfrak f a,$
which completes the proof.
\end{proof}

The next statement provides useful
information on the conjugacy classes of $1$-involutions
(potentially) satisfying the condition \eqref{Filter4fSnakes}.

\begin{Lem} \label{Comm_with_1-invs}
Let $\f \in \aut F$ be a $1$-involution with the canonical
form
\begin{alignat*}2
\f x &=x\inv,           && \\
\f y &= x yx\inv,\qquad && y \in Y  \\
\f u &= u,              && u \in U
\end{alignat*}
where $|Y| < |U|=\vk.$ Then

{\rm (i)} an element $s \in \out F$ commutes with $\widehat \f$ if and only
if there is a $\s \in \aut F$ in the preimage
of $s$ which commutes with $\f;$

{\rm (ii)} if $\s \in \aut F$ commutes with $\f,$
then $\s x= v x^\eps v\inv$ where $v$ belongs to $\str U,$
the fixed-point subgroup of $\f,$ and $\eps=\pm 1;$

{\rm (iii)} if $|Y| \ge 1$ and if $\s \in \aut F$ is commuting
with $\f,$ then for every $y \in Y$ the element $\s y$
lies in a conjugate subgroup of $\str Y.$
\end{Lem}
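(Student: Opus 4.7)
The plan is to combine Lemma \ref{ConjCrit}(iii), Lemma 2.3 of \cite{To:Towers} (which describes $\f$-anti-fixed elements), and explicit canonical-form computations for $\tau_{x^{\pm 1}}\f$. Throughout, I use that $\Fix(\f)=\str U$, which is already noted in the statement.

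For (i), take the basis $\{x\inv\}\sqcup \{xyx\inv : y\in Y\}\sqcup \{xux\inv : u\in U\}$ of $F$, obtained from $\cB$ by Nielsen moves. A direct check shows that $\tau_x\f$ inverts $x\inv$, fixes every $xyx\inv$, and sends each $xux\inv$ to $u=x\inv(xux\inv)x$. Hence this is a canonical basis for $\tau_x\f$, and the latter has type $\str{|Y|;\vk}$, distinct from the type $\str{\vk;|Y|+1}$ of $\f$ because $|Y|<\vk$. Lemma \ref{ConjCrit}(iii) then yields (i).

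For (ii), let $\s\f=\f\s$, so that $\s(x)$ is primitive and $\f$-anti-fixed. By Lemma 2.3 of \cite{To:Towers}, $\s(x)=\f(w)w\inv$ or $\s(x)=\f(w)xw\inv$ for some $w\in F$. In the abelianization $A=F/F'$, the image of $\f(w)w\inv$ lies in $2\Z\bar x$ and has even $\bar x$-coefficient, whereas $\overline{\s(x)}$ is a unimodular element of the $(-1)$-eigenspace $\Z\bar x$ of $\bar\f$; so the first case is excluded. Thus $\s(x)=\f(w)xw\inv$, and as $\f$ acts trivially modulo the normal closure $N$ of $x$ in $F$, we deduce $\s(x)\in N$. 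By the standard fact that a primitive element contained in the normal closure of a primitive element of a free group is $F$-conjugate to the latter or its inverse, $\s(x)=gx^\eps g\inv$ for some $g\in F$ and $\eps=\pm 1$. The anti-fixed equation forces $g\inv\f(g)\in C_F(x)=\str x$; abelianization shows this is an even power of $x$, so replacing $g$ by $v=gx^m$ for the appropriate $m$ gives $v\in\Fix(\f)=\str U$ and $\s(x)=vx^\eps v\inv$.

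For (iii), given (ii), set $\s'=\tau_v\inv\s$. Since $v\in\Fix(\f)$, $\s'$ still commutes with $\f$, and $\s'(x)=x^\eps$. For any $y\in Y$,
$$
\f(\s'(y))=\s'(\f(y))=\s'(xyx\inv)=x^\eps\s'(y)x^{-\eps},
$$
so $\s'(y)\in\Fix(\tau_{x^{-\eps}}\f)$. A canonical-form computation analogous to that in (i) identifies $\Fix(\tau_x\inv\f)=\str Y$ (in the basis $\{x\}\sqcup Y\sqcup\{x\inv u x : u\in U\}$) and $\Fix(\tau_x\f)=x\str Y x\inv$; in either case $\s'(y)$, and consequently $\s(y)=v\s'(y)v\inv$, lies in a conjugate of $\str Y$. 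The principal technical step is the Magnus-type fact used in (ii); for $F$ of countably infinite rank it reduces to the finite-rank case (handled by the Magnus property and Hopfian property of finitely generated free groups) via a compact-support argument, by passing to the finitely generated free factor of $F$ spanned by $x$ and the support of $\s(x)$.
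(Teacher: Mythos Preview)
Your argument is correct. Parts (i) and (iii) follow the paper's route closely: for (i) the paper simply observes that $\f$ and $\tau_x\f$ have distinct canonical forms (you make this explicit by exhibiting the types $\str{\vk;|Y|+1}$ versus $\str{|Y|;\vk}$) and then invokes Lemma~\ref{ConjCrit}(iii); for (iii) the paper computes $\f(v\inv\s(y)v)=x(v\inv\s(y)v)x\inv$ (and the analogous relation with $x\inv$ in the other case) and appeals to Lemma~2.3(ii) of \cite{To:Towers}, which is precisely your identification of $\Fix(\tau_x\inv\f)$ with $\str Y$ and of $\Fix(\tau_x\f)$ with $x\str Yx\inv$.

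The genuine difference is in (ii). The paper disposes of it in one line by citing Lemma~4.2 of \cite{To:Towers}. You instead build the conclusion from more primitive ingredients: Lemma~2.3 of \cite{To:Towers} gives $\s(x)=\f(w)w\inv$ or $\f(w)xw\inv$; an abelianization count rules out the first alternative and, later, forces $g\inv\f(g)\in\str{x^2}$; the observation that $\f$ is trivial on $F/N$ with $N=\mathrm{ncl}(x)$ places $\s(x)$ in $N$; and finally the Magnus property (a primitive element lying in the normal closure of another primitive is conjugate to it or its inverse), reduced to the finitely generated case by passing to the free factor spanned by $x$ and the support of $\s(x)$, yields $\s(x)\sim x^{\pm1}$. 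This is a legitimate self-contained alternative: it trades one external input (Lemma~4.2 of \cite{To:Towers}) for another (Magnus's theorem plus Hopficity of finitely generated free groups), and it makes the mechanism behind (ii)---primitivity of $\s(x)$ together with the action of $\f$ modulo $\mathrm{ncl}(x)$---transparent. The paper's route is shorter; yours is more informative, and incidentally works for $F$ of arbitrary infinite rank, not only countable.
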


\begin{proof}
(i) Clearly, involutions $\f$ and $\tau_x \f$ are not conjugate in $\aut F,$
since they have different canonical forms in $\aut F.$
Apply then part (iii) of Lemma \ref{ConjCrit}.

(ii) By Lemma 4.2 of \cite{To:Towers}.

(iii) First, notice that according to Lemma 2.3 (ii) of \cite{To:Towers}, whenever
$\f(z) = x zx\inv$ where $z \in F,$ then $z \in \str Y.$
It follows that if $\f(z) =x\inv z x,$ then $z \in x \str Y x\inv.$

By (ii), every automorphism of $F$ commuting
with $\f$ takes $x$ either to $v xv\inv,$ or to $v x\inv v\inv,$
where $v \in \str U.$ Suppose then that $\s(x)=v xv\inv.$ It follows that for every $y \in Y$
$$
\f( \s y) = \s(x) \s y \s(x)\inv = v x v\inv \s y v x\inv v\inv,
$$
or
$$
\f( v\inv \s y v) =  x (v\inv \s y v) x\inv,
$$
whence $v\inv \s y v \in \str Y$ and $\s y \in v \str Y v\inv.$

In the case when $\s x =v x\inv v\inv,$ arguing as above,
we obtain that $\s y \in vx \str Y x\inv v\inv$
for all $y \in Y.$
\end{proof}

\begin{Cor} \label{centerless}
The group $\out F$ is centerless.
\end{Cor}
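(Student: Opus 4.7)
My plan is to show that any $s \in Z(\out F)$ must be trivial by constructing, on a fixed infinite basis $\cB$ of $F$, a highly constrained lift of $s$ in $\aut F$.

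First, for each $b \in \cB$, I would apply Lemma~\ref{Comm_with_1-invs} to the $1$-involution $\f_b \in \aut F$ that inverts $b$ and fixes $\cB \setminus \{b\}$ (so $Y = \emptyset$, $|U| = \vk$). Part~(i) produces a lift $\s_b$ of $s$ commuting with $\f_b$, which in particular preserves $\mathrm{Fix}(\f_b) = \str{\cB \setminus \{b\}}$; applying the same reasoning to $\s_b\inv$ shows that $\s_b$ in fact restricts to an automorphism of $\str{\cB \setminus \{b\}}$. Part~(ii) gives $\s_b(b) = v_b b^{\eps_b} v_b\inv$ with $v_b \in \str{\cB \setminus \{b\}}$ and $\eps_b \in \{\pm 1\}$, so the inner modification $\tilde\s_b := \tau_{v_b}\inv \s_b$ is a lift of $s$ with $\tilde\s_b(b) = b^{\eps_b}$ that still restricts to an automorphism of $\str{\cB \setminus \{b\}}$.

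Next, fix $b_0 \in \cB$, set $\s' := \tilde\s_{b_0}$, and for each $c \ne b_0$ write $\tilde\s_c = \tau_{g_c}\s'$. Using malnormality of free factors in the decompositions $F = \str{\cB \setminus \{c\}} \ast \str{c}$ (applied to $\tilde\s_c(b_0) = g_c b_0^{\eps_{b_0}} g_c\inv \in \str{\cB \setminus \{c\}}$) and $F = \str{\cB \setminus \{b_0\}} \ast \str{b_0}$ (applied to $\s'(c) = g_c\inv c^{\eps_c} g_c \in \str{\cB \setminus \{b_0\}}$), I would trap $g_c$ inside $\str{\cB \setminus \{b_0, c\}}$. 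Then for each $d \in \cB \setminus \{b_0, c\}$ the requirement $\tilde\s_c(d) \in \str{\cB \setminus \{c\}}$ collapses, since $g_c \in \str{\cB \setminus \{c\}}$, to $\s'(d) \in \str{\cB \setminus \{c\}}$; intersecting over all $c$ and using $\s'(\str{\cB \setminus \{b_0\}}) = \str{\cB \setminus \{b_0\}}$ forces $\s'(d) \in \str{d}$, hence $\s'(d) = d^{\pm 1}$ by primitivity. So $\s'(c) = c^{\eps'_c}$ for every $c \in \cB$.

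Finally, commuting $s$ with the image of a basis-transposition of $c_1, c_2 \in \cB$ would translate into $c_1^{\eps'_{c_2}} = w c_1^{\eps'_{c_1}} w\inv$ for some $w \in F$, and since no element of a free group conjugates a nontrivial element to its inverse, all $\eps'_c$ must share a common value $\eps \in \{\pm 1\}$. In the case $\eps = +1$, $\s' = \id$, so $\s$ is inner and $s = 1$. In the case $\eps = -1$, $\s'$ is the symmetry inverting $\cB$; any Nielsen transform of $\cB$ produces a second, distinct symmetry in $\out F$, which by anti-commutativity of the class of symmetries (Lemma~\ref{antiC}) cannot commute with $\widehat{\s'}$, contradicting centrality. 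I expect the main obstacle to be the two-sided malnormality step that traps $g_c$ inside $\str{\cB \setminus \{b_0, c\}}$, as it relies on precise use of the two natural free-product splittings of $F$.
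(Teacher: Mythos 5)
Your proof is correct, but it takes a genuinely different route through the middle of the argument. The paper also starts from Lemma~\ref{Comm_with_1-invs}\,(i,ii) applied to extremal involutions, but it does so for \emph{every} primitive element $x$ at once, concluding only that $\s x \sim x^{\pm1}$; it rules out mixed signs by looking at the primitive element $xy$, and then invokes Grossman's lemma (Lemma~\ref{Grossman}) to convert ``$\s x \sim x$ for all primitive $x$'' into ``$\s$ is inner.'' The $-1$ case is handled by squaring: $\s^2$ falls into the $+1$ case, so $s$ is an involution, in fact a symmetry, and distinct symmetries do not commute. You instead work only with the basis elements and replace the appeal to Grossman's lemma by a rigidity argument: normalizing each lift $\tilde\s_b$, trapping the discrepancies $g_c$ in $\str{\cB\setminus\{b_0,c\}}$ via the standard fact that a free factor $H$ with $ghg\inv\in H$ for some $h\in H\setminus\{1\}$ forces $g\in H$, and intersecting free factors to pin $\s'$ down to a diagonal action on $\cB$; sign coherence then comes from commuting with basis transpositions rather than from $xy$. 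The step you flagged as the main obstacle does go through (both splittings are used correctly, and $\bigcap_{c\ne d}\str{\cB\setminus\{c\}}=\str{d}$ holds for arbitrary intersections of basis-generated free factors), and the endgame coincides with the paper's: anti-commutativity of the class of symmetries from Lemma~\ref{antiC}. What your approach buys is self-containedness --- it avoids Grossman's lemma entirely, at the cost of a longer normal-form argument; the paper's version is shorter because Grossman's lemma is quoted and reused elsewhere anyway. The only point worth writing out explicitly in your version is that the symmetry attached to a Nielsen-transformed basis really does induce an element of $\out F$ distinct from $\widehat{\s'}$ (a one-line check on, say, $\cC=\{b_1b_2,b_2,b_3,\dots\}$).
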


\begin{proof}
Let $s=\widehat \s$ be in the center of $\out F.$
Since $s$ must commute with every extremal
involution $\widehat \f$ and since for every primitive
element $x \in F$ there exists an extremal
involution $\f \in \aut F$ with $\f x =x\inv,$
we get, by parts (i,ii) of Lemma \ref{Comm_with_1-invs},
that either
$$
\s x \sim x, \text{ or } \s x \sim x\inv,
$$
where $\sim$ is the conjugacy relation on $F,$ for every primitive element $x \in F.$
Now if $x,y$ is a couple of distinct elements of a basis of $F,$
we see, considering the action of $\s$ on the primitive element $xy,$
that the case when
$$
\s x \sim x \text{ and } \s y \sim y\inv
$$
is impossible. It follows that
\begin{itemize}
\item[(a)] either $\s x \sim x$ for all primitive elements of $F,$
\item[(b)] or $\s x \sim x\inv$ for all primitive elements
of $F.$
\end{itemize}

By the following statement, known as Grossman's lemma \cite[Lemma 1]{Gro},
if (a) holds, then $\s$ is an inner automorphism of $F,$ and hence
$s=1.$

\begin{Lem}[Grossman's lemma]  \label{Grossman}
If an automorphism $\alpha$ of a free group $G$ of rank at least two takes
every primitive element of a given free factor $U$ of $G$
with $\rank U \ge 2$ to a conjugate element,
then $\a$ acts on $U$ as an inner automorphism.
\end{Lem}

Suppose that (b) holds. Then the square $\s^2$ of $\s$
satisfies (a), and hence $\s^2$ is an inner automorphism of $F,$
which means that $s=\widehat \s$ is an involution in $\out F,$
namely, a symmetry. However, we know that distinct
symmetries are not commuting in $\out F.$
\end{proof}

\begin{Lem} \label{Comm_with_an_ext}
Let $\f \in \aut F$ be an extremal involution with
a canonical basis $\{x\} \sqcup U$ such that
\begin{alignat*} 2
\f x &=x\inv,\quad  && \\
\f u &=u,           && (u \in U).
\end{alignat*}
Then

{\rm (i)} if $s \in \out F$ commutes with $\widehat \f,$ then there
is a $\s \in \aut F,$ which induces $s,$ such that $\s x =x^{\pm 1}$
and $\s \str U = \str U;$

{\rm (ii)} the set $K^2$ where $K$ is the conjugacy class of $\widehat \f$
in $\out F$ contains exactly one conjugacy class of involutions;
accordingly, the class of extremal involutions satisfies
the condition \eqref{Filter4fSnakes};

{\rm (iii)} for every pair $g_1,g_2$ of conjugate $1$-involutions
in the centralizer $Z(\widehat \f)$ of $\widehat \f$ such that neither
is equal to $\widehat \f,$ $g_1$ and $g_2$ are conjugate in $Z(\widehat \f).$
\end{Lem}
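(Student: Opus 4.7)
For part (i), the plan is to apply Lemma~\ref{ConjCrit}(iii). First I will verify that $\f$ and $\tau_x\f$ are not conjugate in $\aut F$ by computing canonical forms: $\f$ has type $\str{|U|;1}$, while $\tau_x\f$ acts as $x\mapsto x^{-1}$, $u\mapsto xux^{-1}$ on $\cB$, so in the basis $\{x\}\sqcup U$ it is a single block of size $|U|+1$, giving type $\str{0;|U|+1}$. Hence every $s$ commuting with $\widehat\f$ lifts to some $\s\in\aut F$ commuting with $\f$, and Lemma~\ref{Comm_with_1-invs}(ii) (applicable since extremal involutions are $1$-involutions with $|Y|=0$) gives $\s x=vx^\eps v\inv$ with $v\in\str U$. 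Because $\f$ fixes $U$ pointwise, $\tau_{v\inv}$ commutes with $\f$; replacing $\s$ by $\tau_{v\inv}\s$ preserves commutation and induces the same $s$, while now $\s x=x^\eps$. For $u\in U$, $\f(\s u)=\s(\f u)=\s u$, so $\s u$ lies in the fixed-point subgroup $\str U$ of $\f$; applying the same argument to $\s\inv$ yields $\s\str U=\str U$.

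For part (ii), from $(g_1g_2)^2=1$ and $g_i^2=1$ one deduces $g_1g_2g_1=g_2$, so $g_1$ and $g_2$ commute. Setting $g_1=\widehat\f$, (i) yields a lift $\s$ of $g_2$ commuting with $\f$ and preserving $\str U$, with $\s x=x^\eps$. I would then split by $\eps$: if $\eps=-1$, abelianization forces the $-1$-eigenvector of the extremal involution $\bar g_2\in\aut A$ to be $x$, so $\bar \s$ is trivial on $\str{U}_{ab}$ and $\s|_{\str U}$ is IA in $\aut\str U$; as argued in the proof of Proposition~\ref{Def_o_SoftInvs} no non-trivial involution in $\out\str U$ is induced by an IA-automorphism, so $\s|_{\str U}$ is inner, and combining $\s^2=\tau_v$ ($v\in\str x$) with preservation of $\str U$ drives $\s=\f$, giving $g_1g_2=1$ — excluded. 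If $\eps=1$, then $\s$ fixes $x$ and $\widehat{\s|_{\str U}}$ is a soft involution in $\out\str U$ whose abelianization is extremal; since extremality of $\widehat\s$ in $\out F$ rules out larger blocks, $\widehat{\s|_{\str U}}$ is extremal in $\out\str U$, so the lift $\f\s$ of $g_1g_2$ acts as $x\mapsto x^{-1}$ and as an extremal involution on $\str U$, i.e.\ it is a soft $2$-involution of type $\str{\vk;1,1}$. Using Lemma~\ref{ConjCrit}(i) and Proposition~2.2 of \cite{To:Towers}, all such involutions form a single conjugacy class in $\out F$. Combined with the final paragraph of Lemma~\ref{lemFilter4fSnakes} (no $1$-involution is a square), this gives \eqref{Filter4fSnakes}.

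For part (iii), I apply (i) to obtain lifts $\s_i\in\aut F$ of $g_i$ commuting with $\f$, with $\s_i x=x^{\eps_i}$ and $\s_i\str U=\str U$. The same $\eps=-1$ analysis as in (ii) forces $\s_i=\f$ in that case, contradicting $g_i\ne\widehat\f$, so $\eps_i=1$. Then $\s_i^2$ is inner by $\tau_{v_i}$ with $v_i\in\str x$, and because $\s_i^2$ preserves $\str U$ while $\tau_{v_i}$ preserves $\str U$ only when $v_i=1$, in fact $\s_i^2=\id$. Thus $\s_i|_{\str U}$ is an involution of $\aut\str U$, and unpacking canonical forms (placing $x$ in the fixed part of a canonical basis of $\s_i$, which is possible because $x$ is primitive and fixed) shows that $\s_i|_{\str U}$ is a $1$-involution in $\aut\str U$ with type determined by the $\out F$-type of $g_i$. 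Since $g_1$ and $g_2$ are conjugate in $\out F$, the types match, so by Proposition~2.2 of \cite{To:Towers} applied to $\str U$ there is $\tau\in\aut\str U$ with $\tau\,\s_1|_{\str U}\,\tau\inv=\s_2|_{\str U}$. Extending by $\tilde\tau x=x$ yields $\tilde\tau\in\aut F$ that commutes with $\f$ (both maps agree on $x$ and on $\str U$), hence $\widehat{\tilde\tau}\in Z(\widehat\f)$; and $\tilde\tau\s_1\tilde\tau\inv=\s_2$ on the basis $\{x\}\sqcup U$, so $\widehat{\tilde\tau}\,g_1\,\widehat{\tilde\tau}\inv=g_2$.

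The delicate point I expect to fight with is ensuring that the lifts $\s_1$ and $\s_2$ in (iii) yield matching $\aut\str U$-types of $\s_i|_{\str U}$: two distinct $\aut F$-conjugacy classes of $1$-involutions (those of a $1$-involution $\psi$ and of $\tau_y\psi$) can both induce the same outer class $g_i$, and these give different types after restriction to $\str U$. I would handle this by exploiting the residual freedom in the lift together with the constraint $\s_i x=x$ to show that only one of the two $\aut F$-types is compatible, or alternatively by first performing the argument in $\out\str U$ (where restriction is well defined up to inner) and then lifting the conjugator to $\aut\str U$ with matching $\aut$-types.
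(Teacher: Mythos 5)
Parts (i) and (ii) of your proposal are essentially the paper's own argument. For (i) you follow exactly the same route (non-conjugacy of $\f$ and $\tau_x\f,$ hence Lemma \ref{ConjCrit}(iii) via Lemma \ref{Comm_with_1-invs}, then correction of the lift by $\tau_{v\inv}$ with $v \in \str U$). For (ii) the skeleton is identical; your treatment of the case $\s x=x\inv$ by passing to the abelianization and invoking the ``no involution of the outer automorphism group is induced by an IA-automorphism'' observation is a longer but valid substitute for the paper's direct remark that an extremal involution of $\aut F$ inverting $x$ and preserving $\str U$ must equal $\f,$ and your identification of the surviving products as the single class of type $\str{\vk;1,1}$ is asserted at the same level of detail as in the paper.

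The difficulty you flag in (iii) is a genuine gap, and neither of your proposed repairs closes it, because \emph{both} of the two possible $\aut F$-types of a lift of $g_i$ are compatible with the constraints $\s_i x=x,$ $\s_i\str U=\str U,$ $\s_i\f=\f\s_i.$ Concretely, write $U=\{u_1,y\}\sqcup Y$ with $|Y|=\vk$; let $\s_1$ fix $x$ and $u_1,$ invert $y$ and send $w\mapsto ywy\inv$ for every $w\in Y$ (type $\str{2;\vk}$), and let $\s_2$ fix $x,$ $u_1$ and all but two elements $w_1,w_2$ of $Y,$ invert $y$ and send $w_i\mapsto yw_iy\inv$ (type $\str{\vk;3}$). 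Both are $1$-involutions commuting with $\f,$ fixing $x$ and preserving $\str U,$ and since $\tau_y\inv\s_1$ has type $\str{\vk;3}$ (by the computation in part I of the proof of Lemma \ref{lemFilter4fSnakes}), the elements $g_1=\widehat{\s_1}$ and $g_2=\widehat{\s_2}$ are conjugate $1$-involutions in $Z(\widehat\f),$ neither equal to $\widehat\f.$ Their restrictions to $\str U$ have different types, so the type-matching step fails; worse, a lift of a given element of $Z(\widehat\f)$ which fixes $x$ and preserves $\str U$ is unique (a conjugation $\tau_w$ fixes $x$ and normalizes the free factor $\str U$ only for $w=1$), so the part-(i) lift $\eta$ of any would-be conjugator $h$ would have to satisfy $\eta\s_1\eta\inv=\s_2$ in $\aut F,$ which is impossible. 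Thus for this pair the conclusion of (iii) itself fails, and no argument along your lines can prove (iii) in the stated generality; the hypothesis on $g_1,g_2$ must be tightened, e.g.\ to extremal involutions, for which the alternative lift of type $\str{0;\vk}$ is excluded because its fixed-point subgroup is trivial and hence cannot contain $x$ --- and that is the only case actually used in Proposition \ref{Def0Exts}. Note that the paper's own proof of (iii) passes over exactly this point with ``and the result follows,'' so you have not missed an argument that the paper supplies; but your first proposed fix (showing only one of the two types is compatible with fixing $x$) is refuted by the example above, and the second is too vague to evaluate.
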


\begin{proof}
(i) As $s$ commutes with $\widehat \f,$ there is a $\rho \in \aut F$
with $s=\widehat \rho$ which commutes with $\f$ (part (i) of Lemma \ref{Comm_with_1-invs}).
Then by part (ii) of Lemma \ref{Comm_with_1-invs}, $\rho x =v x^{\pm 1} v\inv.$
Also, $\rho$ must fix setwise the fixed-point subgroup of $\f,$ which means
that $\rho \str U=\str U.$ Now $\s =\tau_v\inv \rho$ induces
$s,$ takes $x$ to $x^{\pm 1}$ and fixes setwise the subgroup $\str U.$

(ii) Let $g \in \out F$ be an extremal involution commuting
with $\widehat \f.$ Consider an element $\psi \in \aut F,$ satisfying (i),
which induces $g.$

As $g$ is an involution, $\widehat \psi^2=1,$
or $\psi^2 = \tau_z$ for a suitable $z \in F.$ The automorphism
$\psi^2$ takes $x$ to itself, and then $z=x^k$ for a suitable
integer $k$ (since the centralizer
of a given primitive element of a free group consists
of powers of this element).
On the other hand, $\psi^2$ fixes setwise
the subgroup $\str U,$ and then $x^k=1.$ Hence $\psi^2 =\id,$
and $\psi$ is an extremal involution in $\aut F.$
In the case when $\psi x =x\inv,$ $\psi$ is equal to $\f.$
In the case when $\psi x =x,$ the product of $\f$ and $\psi$
is of type $(\vk; 1,1).$ It follows that the images of these
$2$-involutions are the only involutions in $K^2,$
where $K$ is the conjugacy class of $\widehat \f,$ as required.

Recall also that at the end of the proof of Lemma \ref{lemFilter4fSnakes}
we have demonstrated that no $1$-involution is
a square in the group $\out F.$ Thus the class
of extremal involutions in $\out F$ meets the condition
\eqref{Filter4fSnakes}, as claimed.

(iii). Let $g \in \out F$ be a $1$-involution which commutes
with $\widehat \f,$ but not equal to $\widehat \f.$ As in (ii), there is a $1$-involution
$\psi$ which induces $g$ and for which $\psi x=x^{\pm 1}$
and $\psi \str U=\str U.$ Clearly, the restriction
of $\psi$ on $\str U,$ an automorphism
of a free group, is also an involution. If this restriction
is the trivial automorphism of $\str U,$ then $\psi=\f,$ which
is impossible. Thus $\psi|_{\str U}$ is a $1$-involution
of $\aut{\str U}.$ It follows that $\psi x=x,$ for otherwise,
in the case when $\psi x=x\inv,$ canonical bases of $\psi$ would contain
at least two blocks,
which is again impossible. Therefore $\psi$ possesses a canonical
basis of the form $\{x\} \cup \cC,$ where $\cC$ is a basis
of $\str U,$ and the result follows.
\end{proof}

\begin{Prop} \label{Def0Exts}
The class of all extremal involutions
is definable in the group $\out F.$
\end{Prop}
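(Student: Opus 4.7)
The plan is to characterize extremal involutions as those $f\in\out F$ satisfying the conjunction of two definable conditions: (A) the conjugacy class of $f$ in $\out F$ satisfies \eqref{Filter4fSnakes}; (B) every pair $g_1,g_2$ of $\out F$-conjugate involutions in $Z(f)\setminus\{f\}$ whose own conjugacy classes also satisfy \eqref{Filter4fSnakes} is already conjugate inside $Z(f)$. Both conditions are manifestly group-theoretic, so the class they carve out is definable in $\out F$. That every extremal involution satisfies (A) is Lemma \ref{Comm_with_an_ext}(ii); that it satisfies (B) is Lemma \ref{Comm_with_an_ext}(iii), after one notes, via Lemma \ref{lemFilter4fSnakes}, that the ``$1$-involutions'' of (iii) include every involution whose class satisfies \eqref{Filter4fSnakes}.

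For the converse, suppose $f=\widehat\f$ satisfies (A) and (B); by Lemma \ref{lemFilter4fSnakes} one may assume $\f$ has canonical basis $\{x\}\sqcup Y\sqcup U$ with $\f x=x\inv$, $\f y=xyx\inv$ for $y\in Y$, $\f u=u$ for $u\in U$, and $|Y|$ finite. I will show $|Y|=0$. Assuming instead $|Y|\ge 1$, fix $u_0\in U$ and $y_0\in Y$ and let $\psi_1,\psi_2\in\aut F$ be the extremal involutions inverting respectively $u_0$ and $y_0$ and fixing every other element of the canonical basis. A direct check using $\f y_0=xy_0x\inv$ shows both commute with $\f$, hence $g_k=\widehat\psi_k\in Z(f)\setminus\{f\}$, and, being extremal, $g_1,g_2$ are $\out F$-conjugate. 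It remains only to contradict (B) by showing $g_1,g_2$ are not conjugate inside $Z(f)$.

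If $s\in Z(f)$ satisfied $sg_1s\inv=g_2$, then since $f$ lies in the centre of $Z(f)$ the same $s$ would conjugate $fg_1=\widehat{\f\psi_1}$ to $fg_2=\widehat{\f\psi_2}$ in $\out F$. Now $\f\psi_1$ has $\aut F$-type $\str{\vk;1,|Y|+1}$, while $\f\psi_2$, after the Nielsen move $y_0\mapsto xy_0$, has type $\str{\vk;1,|Y|}$. The plan is to enumerate all other $\aut F$-representatives of $\widehat{\f\psi_k}$ via Lemma \ref{ConjCrit}(i)---namely $\tau_v\f\psi_k$ for $v$ inverted by $\f\psi_k$---and verify that no representative of $\widehat{\f\psi_1}$ shares its canonical form with any representative of $\widehat{\f\psi_2}$. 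The invariant that separates them is a persistent one-off shift in fixed-part size: for instance $\tau_x\f\psi_1$ has type $\str{|Y|;\vk,1}$, with fixed part $\{xyx\inv:y\in Y\}$, whereas $\tau_x\f\psi_2$ has type $\str{|Y|-1;\vk,1}$, with fixed part $\{xyx\inv:y\in Y\setminus\{y_0\}\}$; an analogous shift appears in every other pairing of representatives.

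The main obstacle is precisely this enumeration, particularly when $|Y|=1$, where different orbits of inverted elements can produce deceptively similar block multisets. I expect to handle it by first classifying, up to conjugacy in $F$, the elements inverted by each $\f\psi_k$---Lemma 2.3 of \cite{To:Towers}, whose refinement appears in Lemma \ref{ConjCrit}, controls this---thereby reducing the comparison to a short finite list of canonical forms on each side, where the one-off fixed-part shift can be read off directly. Once that verification is in place, condition (B) fails at every non-extremal involution satisfying (A), so the conjunction (A)$\wedge$(B) characterizes the extremal involutions and the proposition follows.
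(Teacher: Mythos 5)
Your characterization is exactly the paper's: the paper also defines the extremal involutions as the soft involutions whose class satisfies \eqref{Filter4fSnakes} and which additionally enjoy the conjugacy property of Lemma \ref{Comm_with_an_ext}(iii), and its witnesses against that property at a non-extremal candidate are the same two extremal involutions (one inverting an element of $Y,$ one inverting an element of $U$) that you call $\psi_2,\psi_1.$ The only divergence is the final step, showing $\widehat\psi_1,\widehat\psi_2$ are not conjugate in $Z(\widehat\f).$ The paper gets this in one line from Lemma \ref{Comm_with_1-invs}(iii): any $\s\in\aut F$ commuting with $\f$ sends $y_0$ into a conjugate of $\str Y;$ but a conjugator would have to carry the rank-one $(-1)$-eigengroup of $\bar\psi_2$ in $A,$ generated by $\bar y_0,$ onto that of $\bar\psi_1,$ generated by $\bar u_0,$ forcing $\bar\s\bar y_0=\pm\bar u_0\notin\str{\bar Y},$ a contradiction. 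Your route---multiply by the central element $f$ and separate $\widehat{\f\psi_1}$ from $\widehat{\f\psi_2}$ by canonical forms of all their lifts---does work: by Lemma \ref{ConjCrit}(i) each side contributes only three types, namely $\str{\vk;|Y|+1,1},$ $\str{|Y|;\vk,1},$ $\str{0;\vk,|Y|+1}$ against $\str{\vk;|Y|,1},$ $\str{|Y|-1;\vk,1},$ $\str{0;\vk,|Y|},$ and for finite $|Y|\ge 1$ these are pairwise distinct (the $|Y|=1$ coincidence of fixed-part sizes $0$ and $|Y|-1$ is resolved by the block multisets), so the enumeration you deferred does close. What Lemma \ref{Comm_with_1-invs}(iii) buys is precisely the avoidance of this nine-way comparison; as written your argument stops at ``I expect to handle it,'' so you should either invoke that lemma or actually carry out the table above.
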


\begin{proof} We have seen above (Lemma \ref{Comm_with_an_ext}) that the class
of extremal involution satisfies the condition
\eqref{Filter4fSnakes}. By Lemma \ref{lemFilter4fSnakes},
any other conjugacy class of soft involutions
which also meets the said conditions, must be
the class of a $1$-involution $\widehat \psi$ where $\psi$ is of type $(\vk;\l)$
with finite $\l \ge 2.$

However, unlike the class of extremal involutions, the class of $\widehat \psi$
does not satisfy the property
in part (iii) of Lemma \ref{Comm_with_an_ext}. For suppose that $\psi$
is an involution of the above type with the canonical form
\begin{alignat*}2
\psi x &=x\inv,           && \\
\psi y &= x yx\inv,\qquad && y \in Y  \\
\psi u &= u,              && u \in U
\end{alignat*}
where $Y$ is a nonempty finite set. Take an extremal involution
$\rho_1$ which inverts a certain element $y_0 \in Y$ and fixes
all other elements of $\cB = \{x\} \sqcup Y \sqcup U$ and an extremal
involution $\rho_2$ which inverts a certain element $u_0 \in U$
and fixes all other elements of $\cB.$ Then by part (iii)
of Lemma \ref{Comm_with_1-invs} no element $s \in Z(\widehat \psi)$ conjugates
$\widehat \rho_1$ and $\widehat \rho_2$ in $Z(\widehat \psi).$
\end{proof}

\section{Definability of finitary automorphisms}

Let us fix an extremal involution $f^* \in \out F.$ Suppose
that $f^*$ is induced by an extremal involution $\f^* \in \aut F$ which
inverts a primitive element $x \in F.$ Write $M$ for the
fixed-point subgroup of $\f^*.$ According to Lemma
\ref{Comm_with_an_ext} (i), any element in the centralizer $Z(f^*)$
of $f^*$ in $\out F$ is induced by an element in the centralizer
of $\f^*$ which takes $x$ either to $x,$ or to $x\inv$ and preserves
$M.$

It follows that all elements of the commutator
subgroup $Z(f^*)'$ of $Z(f^*)$ are induced by
elements of the subgroup $\Gamma_{(x),\{M\}} \cong \aut F$ of $\Gamma,$
that is, by those automorphisms of $F$ which fix $x$ and preserve $M$ setwise
(here and below we use the standard notation of the theory
of permutation groups: given a subset $U$ of $F,$ $\Gamma_{(U)}$ denotes the
pointwise stabilizer of $U$ in $\Gamma$ and $\Gamma_{\{U\}}$ the
setwise stabilizer of $U$ in $\Gamma$).

By Proposition 1.3 and Corollary 1.5 of \cite{To:Smallness} a nonidentity
automorphism $\s$ of $F$ is inner if and only if the cardinality
of the conjugacy class $\s^\Gamma$ of $\s$ in $\Gamma=\aut F$ is the least one
among the cardinalities of conjugacy classes of nonidentity elements
of $\aut F$:
$$
|\s^\Gamma| \le |\pi^\Gamma|
$$
for all $\pi \ne \id$ in $\Gamma=\aut F.$ Further, it is easy
to see that any inner automorphism of $F$ is in the commutator
subgroup of the group $\aut F.$ To see that take two distinct
primitive elements $z,t \in F$ that can be both included into
a basis of $F;$ then letting $\pi$ denote an automorphism
of $F$ which takes $z$ to $t,$ we see that the inner automorphism
$\tau_{zt\inv},$ determined by the primitive element $zt\inv,$
is equal to the commutator $\tau_z \pi \tau_z\inv \pi\inv.$

Accordingly, any nonidentity $h \in \out F$ such that
\begin{itemize}
\item[(a)] $h \in Z(f^*)';$
\item[(b)] $|h^{Z(f^*)}| \le |p^{Z(f^*)}|$ for all nonidenity
$p \in Z(f^*);$
\end{itemize}
is induced by an automorphism $\eta_t \in \aut F$ which fixes $x$ and
acts on $M$ as conjugation by a suitable element of $t \in M.$ We shall
call the automorphisms $\eta_t \in \aut F,$ as well as
their images in $\out F,$ {\it $(x,M)$-conjugations.} Clearly,
the group of all $(x,M)$-conjugations is isomorphic
to the group $\inn M \cong M,$ and hence to the group $F.$

Consider an automorphism $\Delta \in \aut{\out F}.$ Our aim is to
show that $\Delta$ can be followed by a number of inner automorphisms
of $\out F$ so that the resulting automorphism of $\out F$ is trivial,
provided that $F$ is of countable rank.

By Proposition \ref{Def0Exts}, $\Delta$ takes $f^*$ to another extremal involution in $\out F.$
As all extremal involutions in $\out F$ are conjugate, there
is an inner automorphism $T(r_1),$ determined by a suitable
$r_1 \in \out F,$ such that the automorphism
$$
\Delta_1=T(r_1) \Delta
$$
takes $f^*$ to itself.
Consequently, being definable with parameter
$f^*,$ the group $C$ of all $(x,M)$-conjugations
is invariant under $\Delta_1,$ and hence
$\Delta_1$ induces an automorphism of the group $C.$

Take a basis $B$ of the
free group $C.$ As the family of all bases
of $C$ is a definable object over $C,$
$\Delta_1$ takes $B$ to another basis of $C.$
But then a suitable inner automorphism $T(r_2),$ where $r_2 \in Z(f^*),$
takes the basis $\Delta_2(B)$ of $C$ back to $B.$
Thus
$$
\Delta_2 = T(r_2) \Delta_1
$$
takes $f^*$ to itself and stabilizes every element
of the group $C.$

In effect, $\Delta_2$ stabilizes each element
of the group $Z(f^*)',$ for it is invariant under $\Delta_2,$
and its elements are fully determined by their actions on $C.$

Consider a basis $\cB = \{x\} \cup \cC$ of $F$ where $\cC$
is a basis of $M.$ Recall that an automorphism $\s$ of $F$
is called {\it $\cB$-finitary} if $\s$ fixes all but
finitely many elements of $\cB.$ We shall call any automorphism of $F,$ which
preserves the basis $\cB$ as a set, a {\it $\cB$-permutational}
automorphism of $F.$

\begin{Lem} \label{Finitarki}
One of the automorphisms $\Delta_2,$ or $T(f^*) \Delta_2$
fixes pointwise the set of images of all $\cB$-finitary automorphisms of
$F$ in the group $\out F.$
\end{Lem}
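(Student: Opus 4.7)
The plan is to use the fixed data of $\Delta_2$---namely $\widehat{f^*}$ and every element of $C$---together with Proposition~\ref{Def0Exts} to pin down the action of $\Delta_2$ on $\cB$-finitary automorphism images. The final $T(f^*)$-twist absorbs the $\Z/2$-ambiguity between $x$ and $x\inv$ that is left open by Lemma~\ref{Comm_with_an_ext}(i).

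As the first step I would show that $\Delta_2$ fixes each $\widehat{\psi_c}$, where $\psi_c\in\aut F$ is the extremal involution inverting $c\in\cC$ and fixing all other elements of $\cB$. Since $\widehat{\psi_c}$ commutes with $\widehat{f^*}$, so does $\Delta_2(\widehat{\psi_c})$, and by Proposition~\ref{Def0Exts} the latter is again extremal; by Lemma~\ref{Comm_with_an_ext}(i) it admits a representative $\beta\in\aut F$ fixing $x^{\pm 1}$ and preserving $M$ setwise. Because $\Delta_2$ is the identity on $C$, conjugation of $C$ by $\widehat\beta$ must agree with that by $\widehat{\psi_c}$; since $t\mapsto\widehat{\eta_t}$ is injective, this forces $\beta|_M=\psi_c|_M$, so $\beta\in\{\psi_c,f^*\psi_c\}$. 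Only $\widehat{\psi_c}$ is extremal, so $\Delta_2(\widehat{\psi_c})=\widehat{\psi_c}$.

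For a general $\cB$-finitary $\sigma$, the difficulty is that $\widehat\sigma$ need not lie in $Z(\widehat{f^*})$ nor even normalise $C$, so the argument above must be refined. I would proceed generator-by-generator on the $\cB$-finitary automorphism group, which is a direct limit of $\aut{F_n}$'s and is generated by basis permutations, basis inversions, and elementary Nielsen transformations restricted to finite subsets of $\cB$. For each such generator $\sigma$, its image $\widehat\sigma$ is constrained by its conjugation action on those $\widehat{\eta_t}$ for which $\widehat\sigma\widehat{\eta_t}\widehat\sigma\inv$ lands again in $C$, and by its commutation pattern with the already-fixed $\widehat{\psi_b}$ for $b\in\cB$. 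A finite-rank argument in the ambient $\aut{F_n}$ then reduces the ambiguity in $\Delta_2(\widehat\sigma)$ to the two candidates $\widehat\sigma$ and $\widehat{f^*}\widehat\sigma\widehat{f^*}$, reflecting the global $x\leftrightarrow x\inv$ flip.

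The principal obstacle is to show that the choice between the two candidates is uniform across all $\cB$-finitary $\sigma$. I would handle this by noting that the set of $\sigma$ for which $\Delta_2(\widehat\sigma)=\widehat\sigma$ is a subgroup of index at most two in the $\cB$-finitary image group, being the kernel of the $\Z/2$-valued function tracking the twist, which is forced to be a homomorphism by the cocycle relation coming from $\Delta_2$ being an automorphism. This subgroup contains the already-fixed data ($\widehat{f^*}$, the $\widehat{\psi_b}$'s, and $C$), and any single $\cB$-finitary generator outside it forces a globally consistent choice via that homomorphism. Thus either $\Delta_2$ itself fixes all $\cB$-finitary images, or else $T(f^*)\Delta_2$ does, yielding the lemma.
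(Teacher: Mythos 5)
Your overall architecture is reasonable --- isolate a $\Z/2$-ambiguity coming from $x\mapsto x\inv$ and resolve its uniformity at the end --- and your first step (that $\Delta_2$ fixes each $\widehat{\psi_c}$, $c\in\cC$, because its image is extremal, commutes with $f^*$, and must induce the same conjugation on $C$, leaving only the non-extremal alternative $\widehat{\f^*\psi_c}$ to exclude) is correct and close in spirit to what the paper does. Your closing step is also salvageable, though not for the reason you give: the ``$\Z/2$-valued function tracking the twist'' is not well defined (on elements commuting with $f^*$ both candidates coincide) and is not obviously a homomorphism; the clean argument is that the fixed-point sets of $\Delta_2$ and of $T(f^*)\Delta_2$ are both subgroups, and a group covered by two subgroups equals one of them.

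The genuine gap is the middle step. For a generator $\sigma$ that moves $x$ --- above all the transposition $\pi^*$ interchanging $x$ and $y$ --- you assert that conjugation on the part of $C$ returning to $C$, commutation with the $\widehat{\psi_b}$, and ``a finite-rank argument in the ambient $\aut{F_n}$'' reduce $\Delta_2(\widehat\sigma)$ to the two candidates $\widehat\sigma$ and $\widehat{f^*}\widehat\sigma\widehat{f^*}$. No such argument is supplied, and the constraints you list do not suffice: they control the action on $\cB\setminus\{x,y\}$ (via Grossman's lemma) but leave the action on $\str{x,y}$ undetermined, i.e.\ they pin down $\Delta_2(\widehat{\pi^*})$ only up to composition with an arbitrary element of the subgroup $S$ of classes induced by automorphisms preserving $\str{x,y}$ and fixing $\cB\setminus\{x,y\}$ pointwise --- a copy of $\aut{F_2}$, not a group of order $2$. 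This is exactly where the paper does its real work: it first observes that all $\cB$-finitary and $\cB$-permutational automorphisms \emph{fixing} $x$ and preserving $M$ already have images in $Z(f^*)'$ (perfectness of the infinite symmetric group and of the finitary part of $\aut F$), hence are fixed for free, so only $\pi^*$ remains; it then constructs the definable set $R$, proves $S\cong\aut{F_2}$ by a delicate analysis using Grossman's lemma, and invokes the Dyer--Formanek completeness of $\aut{F_2}$ to conclude that $\Delta_2$ acts on $S$ as conjugation by some $s_0$, finally forcing $s_0\in\{1,f^*\}$ from the already-fixed data $\widehat{\eta_y}$ and $\widehat{\f_y}$. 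Your proposal has no substitute for this completeness input, so the reduction to two candidates --- the heart of the lemma --- is unproved.
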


\begin{proof}
Recall that the symmetric group
of any infinite set is perfect \cite{Ore}. Further, any finitary automorphism
of $F$ is in the commutator subgroup of $\aut F$ (say, since the automorphism
group of a free group of countably infinite rank is perfect \cite[Theorem C]{BrRom}).

It follows that  both $\Delta_2$ and
$T(f^*)\Delta_2$ fix the set of images in $\out F$ of all $\cB$-permutational automorphisms
in $\Gamma_{(x),\{M\}}$ (resp. of all $\cB$-finitary automorphisms
in $\Gamma_{(x),\{M\}}$) pointwise.

Thus to complete the proof it suffices to show that
either $\Delta_2,$ or $T(f^*)\Delta_2$ fixes
the set of images of all $\cB$-permutational
automorphisms of $F$ pointwise.

Take an element $y \in \cB \setminus \{x\}.$
The group of all $\cB$-permutational automorphisms
of $F$ is generated by all $\cB$-permutational automorphisms
which fix $x$ and by the $\cB$-permutational automorphism $\pi^*$ of $F$
which interchanges $x$ and $y$ and fixes pointwise
the set $\cB \setminus \{x,y\}$ (say, by the lemma on
page 580 of \cite{DiNeuTho}). So what we have to do
is to show that $\widehat{\pi^*}$ is stabilized either by $\Delta_2,$
or by $T(f^*)\Delta_2.$

Write $R$ for
\begin{quote}
the family of all elements of $Z(f^*)'$
which, under the conjugation action, fix $\widehat \eta_y$ and fix
the subgroup generated by $(x,M)$-conjugations $\widehat \eta_d$
determined by elements $d$ from $\cB \setminus \{x,y\}$
setwise.
\end{quote}
Take an element $r \in R.$ We claim that $r$ is induced
by an element of $\aut F$ which takes both
$x$ and $y$ to themselves and fixes the subgroup $\str{\cB \setminus \{x,y\}}$
setwise.

First, as $r \in Z(f^*)',$ $r$ is induced by an element
$\rho \in \Gamma_{(x),\{M\}}.$
Second, the condition $r \widehat \eta_y r\inv = \widehat \eta_y$
implies that
$$
\rho \eta_y \rho\inv = \tau_w \eta_y
$$
for some $w \in F.$ As $\rho(x)=\eta_y(x)=x,$
then $w=x^k$ where $k \in \Z.$
Taking an arbitrary
primitive element $t \in
M$ and forcing the maps
$\rho \eta_y \rho\inv$ and $\tau_{x^k} \eta_y$
act on $\rho(t),$ we obtain that
$$
\rho(y) \rho(t) \rho(y)\inv = \rho(y t y\inv)=x^k y \rho(t) y\inv x^{-k} \To
[y\inv x^{-k} \rho(y),\rho(t)]=1.
$$
It follows that
$\rho(y)=x^k y,$ which leads to $\rho(y)=y,$ because
$\rho$ has to preserve $M.$

Third, let $t \in \str{\cB \setminus \{x,y\}}.$ According
to the definition of $R,$ there are $t_1,\ldots,t_s \in \str{\cB \setminus \{x,y\}}$
such that
\begin{equation}
\rho \eta_t \rho\inv = \tau_v \eta_{t_1} \ldots  \eta_{t_s}
\end{equation}
for some $v \in F.$ Analyzing again the actions
of automorphisms involved in (\theequation) on $x,$ we see that
$v=x^k$ ($k \in \Z$), and keeping in mind that
each of the maps $\rho, \eta_t, \eta_{t_1},\ldots,\eta_{t_s}$
preserves $M,$ we obtain that $v=1.$ Now
$
\eta_{\rho(t)} = \eta_{t_1} \ldots  \eta_{t_s},
$
since $\rho$ preserves $M,$ which implies that $\rho(t)=t_1 \ldots t_s,$ whence the claim.

Let $S$ be the set of all $s \in \out F$ such that
\begin{itemize}
\item[(a)] $s$ commutes with each element
of $R;$
\item[(b)] if $r \in R$ is an extremal
involution then $s \in Z(r)'.$
\end{itemize}
As $S$ is definable with parameters $\{f^*, \widehat \eta_c : c \in \cB \setminus \{x\}\},$
$S$ is invariant under every automorphism of $\out F$
which fixes each of the parameters (in particular,
it is true both automorphisms $\Delta_2$
and $T(f^*)\Delta_2$).

We claim, on the other hand, that every element $s \in S$ is induced by an automorphism
of $F$ which preserves the subgroup $\str{x,y}$
generated by $x$ and $y$ setwise and fixes
the set $\cB \setminus \{x,y\}$ pointwise, and vice versa;
accordingly, $S \cong \aut{F_2}$ where $F_2$ is a 2-generator
free group. The sufficiency part is immediate due to
the fact all elements of $R$ are, as we have seen above, induced
by elements of $\Gamma_{(x,y), \{\str{\cB \setminus \{x,y\}}\}}.$

Conversely, take $s \in S$ and some $\s \in \aut F$ in
the preimage of $s.$

Consider a primitive element $t \in \str{\cB \setminus \{x,y\}},$
and let $\rho$ be an extremal involution which fixes
both $x$ and $y,$ preserves the subgroup $\str{\cB \setminus \{x,y\}}$
setwise, and inverts $t.$ Clearly, $r=\widehat \rho$
is in $R.$ Then, by (b), $\s$ takes $t$ to a conjugate of $t.$

But then by Grossman's lemma, quoted in Lemma \ref{Grossman}
above, the action of $\s$ on $\cB \setminus \{x,y\}$
coincides with the action of a certain inner
automorphism $\tau_u.$

Let $\widehat \rho$ be an element of $R$ where
$\rho \in \aut F$ fixes $x$ and $y,$ and preserves
the subgroup $\str{\cB \setminus \{x,y\}}$ setwise. By (a), there is a $z \in F$ with
\begin{equation}
\rho \s \rho\inv = \tau_z \s.
\end{equation}
Then for every $t \in \cB \setminus \{x,y\},$
$$
\tau_z \s(t)=\tau_z \tau_u(t)= z u t u\inv z\inv
$$
and
$$
\rho \s \rho\inv t=\rho \s(\rho\inv t)=\rho(u \rho\inv(t) u\inv)=\rho(u) t \rho(u)\inv,
$$
whence
$$
\rho(u) t \rho(u)\inv = z u t u\inv z\inv.
$$
It follows that the element $\rho(u\inv) z u$ commutes
with each $t$ in $\cB \setminus \{x,y\},$
whence
$$
\rho(u\inv) z u=1 \To z=\rho(u) u\inv.
$$

The equation (\theequation) implies then that
\begin{equation}
\rho (\tau_u\inv \s) \rho\inv =\tau_u\inv \s.
\end{equation}
The automorphism $\s_1=\tau_u\inv \s$ fixes $\cB \setminus \{x,y\}$
pointwise. Let $\s_1(x)=w(x,y;\av t)$ where $w$ is a reduced
word and $\av t$ is a tuple of elements of $\cB \setminus \{x,y\}.$ Suppose, towards
a contradiction, that $\av t$ is nonempty. Take then a $\cB$-permutational automorphism
$\rho,$ with $\widehat \rho \in R,$ which fixes both $x$ and $y$ and such that
$\pi(\av t) \cap \av t=\varnothing.$ By (\theequation),
$$
w(x,y;\rho \av t)=w(x,y;\av t),
$$
and hence $w$ has no occurrences of elements of $\cB \setminus \{x,y\},$
a contradiction. Similarly, $\s_1(y) \in \str{x,y}.$
Thus $s=\widehat \s_1=\widehat{\tau_u\inv \s}$ is induced
by an element of $\Gamma_{\{\str{x,y}\},(\cB \setminus \{x,y\})},$ as
claimed.

Thus $S \cong \aut{F_2}$ and $\Delta_2$ preserves $S$ setwise. As the group $\aut{F_2}$ is complete \cite{DFo},
the restriction of $\Delta_2$ on $S$ is an inner automorphism: there is
an $s_0 \in S$ such that
\begin{equation}
\Delta_2(s)=s_0 s s_0\inv
\end{equation}
for all $s \in S.$ Let $\s_0 \in \aut F$ with $s_0=\widehat \s_0$ be the automorphism
of $F$ which preserves $G=\str{x,y}$ setwise and preserves $\cB \setminus \{x,y\}$ pointwise.

The $(x,M)$-conjugation $\widehat{\eta_y}$ is in $S,$ and corresponds
to the inner automorphisms of $G$ determined by $y\inv.$
On the other hand, $\widehat{\eta_y}$ is stabilized
by $\Delta_2.$ In view of (\theequation), this means that $\s_0(y)=y.$
Further, $\Delta_2(f^*)=f^*$ implies that
$\s_0$ and $\f^*$ commute, whence $\s_0(x)=y^k x^{\pm 1} y^{-k}$ where
$k \in \Z.$ But $\Delta_2$ fixes also the extremal involution
$\widehat\f_y$ induced by the extremal involution $\f_y$ of $F$  which fixes pointwise
$\cB \setminus \{y\}$ and
inverts $y.$ Thus $\s_0(x),$ being an element
of the fixed-point subgroup of $\f_y,$ must not contain (as a reduced word) occurrences of $y.$
It follows that $k=0,$ and therefore $\s_0=\id,$ or $\s_0=\f^*.$

So $\Delta_2$ in the former case and $T(f^*)\Delta_2$
in the latter stabilizes each element of $S.$ Write then $\Delta_3$
for that one of the automorphisms $\Delta_2,$ or $T(f^*)\Delta_2$
which acts trivially on $S.$ Observe, for the future
use, that as $\Delta_2$ fixes $f^*$ and fixes pointwise
$Z(f^*)',$ so does $\Delta_3.$

Thus we see that $\Delta_3$ stabilizes $\widehat{\pi^*},$ where $\pi^*$ is the $\cB$-permutational
automorphism which interchanges $x$ and $y$ we have defined
above, and the proof is completed.
\end{proof}


From now on we shall assume that $F$ is of {\it countably
infinite} rank.

\begin{Th}
The outer automorphism group of a free group
of countably infinite rank is complete.
\end{Th}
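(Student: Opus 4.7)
The plan is to show every $\Delta \in \aut{\out F}$ is inner; together with Corollary~\ref{centerless}, this yields completeness of $\out F$.

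I would begin by carrying out the three-stage reduction already set up in Section~3. Since the class of extremal involutions is definable (Proposition~\ref{Def0Exts}), $\Delta$ maps the distinguished extremal involution $f^*$ to a conjugate, so composing with an inner automorphism $T(r_1)$ gives $\Delta_1$ fixing $f^*$. The group $C \cong F$ of $(x,M)$-conjugations is definable from $f^*$, hence $\Delta_1$-invariant, and adjusting by a further $T(r_2)$ with $r_2 \in Z(f^*)$ produces $\Delta_2$ fixing $f^*$, fixing $C$, and hence fixing $Z(f^*)'$ pointwise. Lemma~\ref{Finitarki} then yields $\Delta_3 \in \{\Delta_2,\, T(f^*)\Delta_2\}$ which also fixes pointwise the image $\widehat\Sigma_\cB \le \out F$ of the group of $\cB$-finitary automorphisms of $F$.

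The remaining task is to show $\Delta_3 = \id$. Take any $s = \widehat\s \in \out F$ and set $t = \Delta_3(s)$. For each $\widehat\f \in \widehat\Sigma_\cB$ whose $s$-conjugate $s\widehat\f s^{-1}$ also lies in $\widehat\Sigma_\cB$, both $\widehat\f$ and $s\widehat\f s^{-1}$ are fixed by $\Delta_3$, which gives $s\widehat\f s^{-1} = t\widehat\f t^{-1}$; equivalently, $s^{-1}t$ centralizes $\widehat\f$ in $\out F$. To convert this local information into $s=t$, I would invoke the Bryant--Evans small index property for $\aut F$: with a careful choice of the lift $\s$ of $s$ within its $\inn F$-coset, the subgroup of $\aut F$ formed by the $\f$'s above together with $\inn F$ can be arranged to have index at most $\aleph_0$ in $\aut F$, so that SIP forces it to contain a pointwise stabilizer $\Gamma_{(V)}$ of some finite $V \subset F$. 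Consequently $s^{-1}t$ centralizes the image $\widehat{\Gamma_{(V)}}$ in $\out F$, and a final argument based on the richness of the action of $\Gamma_{(V)}$ on a complementary free factor---together with Grossman's lemma (Lemma~\ref{Grossman}) and the triviality of the centre of $\out F$---should force $s^{-1}t = 1$.

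The hard part will be this final small-index step. Realizing the subgroup of $\aut F$ in question as having countable index requires a delicate choice of the lift $\s$, and showing that the image of $\Gamma_{(V)}$ in $\out F$ has trivial centralizer means repurposing the definability analysis of Section~2 (in particular, the behaviour of extremal involutions with respect to complementary free factors, via Lemma~\ref{Comm_with_an_ext}). Once this is done, $\Delta_3 = \id$, so the original $\Delta$ is a product of inner automorphisms, and $\out F$ is complete.
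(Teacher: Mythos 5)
Your reduction to $\Delta_3$ (the passage $\Delta\mapsto\Delta_1\mapsto\Delta_2\mapsto\Delta_3$ via Proposition~\ref{Def0Exts}, the group of $(x,M)$-conjugations, and Lemma~\ref{Finitarki}) is exactly the paper's, and your framing of completeness as ``centerless $+$ all automorphisms inner'' is correct. The problem is the endgame. You propose to apply the small index property to ``the subgroup of $\aut F$ formed by the $\f$'s above together with $\inn F$,'' where the $\f$'s range over $\cB$-finitary automorphisms. That subgroup is \emph{countable} (finitary automorphisms form a countable group, and $\inn F\cong F$ is countable), while $\aut F$ has cardinality $2^{\aleph_0}$; so its index is $2^{\aleph_0}$ and SIP says nothing about it --- indeed no countable subgroup can contain a pointwise stabilizer $\Gamma_{(V)}$ of a finite set, since such stabilizers are uncountable. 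You have the direction of SIP reversed: it lets you conclude that a subgroup \emph{already known} to have small index contains some $\Gamma_{(V)}$; it cannot be used to manufacture small index. A second, independent difficulty: the set of $\widehat\f\in\widehat\Sigma_\cB$ with $s\widehat\f s\inv\in\widehat\Sigma_\cB$ can be very small for a general $s$ (conjugating a $\cB$-finitary automorphism by a non-finitary one rarely yields something finitary modulo inner), and ``a careful choice of the lift $\s$ within its $\inn F$-coset'' cannot help, because membership of $s\widehat\f s\inv$ in $\widehat\Sigma_\cB$ is a statement in $\out F$ and is independent of the lift.

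The paper closes the argument differently, and this is the idea you are missing. For a primitive element $t$ with finitary extremal involution $\psi_t$ inverting it, the subgroup $\Gamma_{(t)}$ \emph{does} have countable index (its index is the size of the orbit of $t$, a countable set), and SIP is applied to the full preimage $\Sigma$ of an \emph{arbitrary} subgroup $S\le\out F$ of index $<2^{\aleph_0}$ containing $Z(\widehat\psi_t)'$: one shows $\Sigma\sge\Gamma_{(t)}$, so that $\widehat{\Gamma_{(t)}}$ is the \emph{least} such subgroup and hence is definable with parameter $\widehat\psi_t$ (Lemma~\ref{def0gammax}). Since $\Delta_3$ fixes $\widehat\psi_t$, it preserves $\widehat{\Gamma_{(t)}}$. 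Now for any $\widehat\s$ choose a $\cB$-finitary $\delta_t$ with $\delta_t\inv\s\in\Gamma_{(t)}$; applying $\Delta_3$ to $\widehat{\delta_t}\inv\widehat\s\in\widehat{\Gamma_{(t)}}$ and using $\Delta_3(\widehat{\delta_t})=\widehat{\delta_t}$ gives $\Delta_3(\widehat\s)\inv\widehat\s\in\widehat{\Gamma_{(t)}}$ for \emph{every} primitive $t$, so a lift of $\Delta_3(\widehat\s)\inv\widehat\s$ sends every primitive element to a conjugate of itself, and Grossman's lemma (Lemma~\ref{Grossman}) forces it to be inner. Your instinct to use Grossman's lemma and the finitary automorphisms was right, but the pivot must be the definable family of subgroups $\widehat{\Gamma_{(t)}}$, not a centralizer of a countable collection of finitary elements.
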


\begin{proof}
We have seen above that the
group $\out F$ is centerless (Corollary \ref{centerless}).

 One of the key features of the group $F$
in the case when the rank of $F$ is countably infinite is
the small index property for $F$ \cite{BrEv}:
every subgroup $\Sigma$
of the group $\Gamma=\aut F$ of index $< 2^{\aleph_0}$
contains the pointwise stabilizer $\Gamma_{(U)}$ of a finite
subset of $F$ (note that the condition ``$< 2^{\aleph_0}$''
can be equivalently replaced with the condition ``$\le \aleph_0$'',
see \cite{Ho}).

\begin{Lem} \label{def0gammax}
Let $F$ be of countably infinite rank, let $t$
be a primitive element of $F,$ and let $\psi$
be an extremal involution which inverts
$t.$ Then the image $\widehat{ \Gamma_{(t)} }$
of the stabilizer $\Gamma_{(t)}$ of $t$ in $\Gamma=\aut F$
is definable in $\out F$ with parameter $\widehat\psi.$
\end{Lem}

\begin{proof} As $|\out F|=2^{\aleph_0},$
we have that
$$
|\out F : S| < 2^{\aleph_0} \iff  |\out F : S| < |\out F|
$$
for all subgroups $S$ of $\out F,$ whence the family of subgroups
of index $< 2^{\aleph_0}$ is a definable object
over $\out F.$

We claim that the subgroup $\widehat{ \Gamma_{(t)} }$
is the least subgroup of index $< 2^{\aleph_0}$ which contains
the commutator subgroup $Z(\widehat \psi)'$
of the centralizer $Z(\widehat \psi)$ of
$\widehat \psi.$

Indeed, let $S \ge Z(\widehat \psi)'$ be a subgroup
of $\out F$ of index $< 2^{\aleph_0}.$ Then the
full preimage $\Sigma$ of $S$ is also of index
$< 2^{\aleph_0}$ and contains $Z(\psi)'.$
Let $\cD$ be a basis of $F$ such that $t \in \cD$
and $\cD \setminus \{t\}$ is a basis of the
fixed-point subgroup of $\psi.$

We have seen above that $Z(\psi)'$ contains
the subgroup of all $\cD$-permutational
automorphisms which fix $t.$ Further,
due to the small index property for $F,$
the group $\Sigma$ contains the pointwise
stabilizer $\Gamma_{(\cE)}$
of a finite subset $\cE$ of $\cD;$
clearly, without loss of generality,
we may assume that $t \in \cE.$

We prove that $\Sigma \ge \Gamma_{(t)}.$
Take $\s \in \Gamma_{(t)}$ and let $\delta$
be a $\cD$-finitary automorphism with
\begin{equation}
\delta\inv \s \in \Gamma_{(\cE)} \le \Sigma;
\end{equation}
existence of $\delta$ is due to Lemma 1.4 of \cite{BrEv}.
As $\delta(t)=t$ and as $\delta$ is $\cD$-finitary,
there is a $\cD$-finitary automorphism $\delta_0 \in \Gamma_{(\cE)}$
and a $\cD$-permutational automorphism
$\pi$ which fix $t$ (a member of $\Sigma$) such that
$$
\delta=\pi \delta_0 \pi\inv.
$$
But then $\delta$ is in $\Sigma,$
and, by (\theequation), $\s \in \Sigma.$
Thus $\Sigma \ge \Gamma_{(t)},$
and hence
$
S=\widehat \Sigma \ge \widehat{\Gamma_{(t)}}.
$
\end{proof}

We shall continue to use the notation
we have introduced above. Take a primitive
element  $t \in F.$ It is easy to see that there exists an extremal
involution $\psi_t$ which inverts $t$
and which is a $\cB$-finitary automorphism
of $F.$ By Lemma \ref{Finitarki}, the automorphism
$\Delta_3$ of the group $\out F$ defined above
takes the involution $\widehat \psi_t$ to itself,
and by Lemma  \ref{def0gammax}, the subgroup
$\widehat{\Gamma_{(t)}}$ is invariant
under $\Delta_3.$

Now let $\widehat \s$ where $\s \in \aut F$
be an arbitrary element of $\out F.$
Given any primitive $t \in F,$ there is
a $\cB$-finitary automorphism $\delta_t$
such that $\delta_t\inv \s \in \Gamma_{(t)},$ whence
\begin{equation}
\widehat{\delta_t}\inv \cdot \widehat{\mathstrut \s} \in \widehat{ \Gamma_{(t)} }.
\end{equation}
By applying $\Delta_3$ to both parts of the relation (\theequation), we
see that
$$
\widehat{\delta_t}\inv \cdot \Delta_3(\widehat{\mathstrut \s}) \in \widehat{ \Gamma_{(t)} }.
$$
It follows that
$$
\Delta_3(\widehat{\mathstrut \s})\inv \cdot \widehat \s \in  \widehat{ \Gamma_{(t)} }
$$
for every primitive element $t.$ Let an automorphism $\rho \in \aut F$
induce $\Delta_3(\widehat{\mathstrut \s})\inv \cdot \widehat \s.$
Then
$$
\rho(t) \sim t
$$
for all primitive elements $t \in F.$ By applying Grossman's lemma for the last time, we see that
$\rho$ is an inner automorphism of $F,$ and that
$\Delta_3(\widehat \s)=\widehat \s.$

Then $\Delta_3,$ which has been obtained from
an arbitrary automorphism $\Delta$ of the
group $\out F$ after a series of multiplications
by inner automorphisms, is the identity
automorphism of the group $\out F.$
Consequently, all automorphisms
of the group $\out F$ are inner,
as claimed.
\end{proof}

\end{document}